\theoremstyle{plain}
\newtheorem{theorem}{Theorem}[subsection]
\newtheorem*{theorem*}{Theorem}
\newtheorem{proposition}[theorem]{Proposition}
\newtheorem{lemma}[theorem]{Lemma}
\theoremstyle{remark}
\theoremstyle{definition}
\let\a\alpha
\let\b\beta
\let\g\gamma
\let\d\delta
\let\e\varepsilon
\let\z\zeta
\let\th\theta
\let\k\kappa
\let\s\sigma
\let\y\upsilon
\let\f\varphi
\def\lra{\longrightarrow}
\def\egal{\ar@{=}}
\def\surj{\ar@{->>}}
\def\E{\mathcal E}
\def\F{\mathcal F}
\def\G{\mathcal G}
\def\K{\mathcal K}
\def\O{\mathcal O}
\def\Y{\mathcal Y}
\def\Z{\mathcal Z}
\def\Ker{{\mathcal Ker}}
\def\Coker{{\mathcal Coker}}
\def\Im{{\mathcal Im}}
\def\AA{\mathbb A}
\def\CC{\mathbb C}
\def\PP{\mathbb P}
\def\WW{\mathbb W}
\def\ZZ{\mathbb Z}
\def\D{{\scriptscriptstyle \operatorname{D}}}
\def\TR{{\scriptscriptstyle \operatorname{T}}}
\def\H{\operatorname{H}}
\def\h{\operatorname{h}}
\def\M{\operatorname{M}}
\def\N{\operatorname{N}}
\def\Sym{\operatorname{S}}
\def\T{\operatorname{T}}
\def\Hom{\operatorname{Hom}}
\def\Aut{\operatorname{Aut}}
\def\Ext{\operatorname{Ext}}
\def\GL{\operatorname{GL}}
\def\Stab{\operatorname{Stab}}
\def\tensor{\otimes}
\def\isom{\simeq}
\def\ba{\begin{array}}
\def\ea{\end{array}}
\title[Homology of the space of sheaves of with Hilbert polynomial $5m+3$]
{On the homology of the moduli space of plane sheaves with Hilbert polynomial $\boldsymbol{5m+3}$}
\author{Mario Maican}
\address{Institute of Mathematics of the Romanian Academy,
Calea Grivitei 21, Bucharest 010702, Romania}
\email{mario.maican@imar.ro}
\keywords{Semi-stable sheaves, Bia{\l}ynicki-Birula decomposition, Betti numbers, Hodge numbers}
\subjclass[2010]{14D22.}
\begin{document}

\maketitle

\begin{abstract}
We compute the Hodge numbers of the moduli space of semi-stable sheaves on the complex projective
plane supported on quintic curves and having Euler characteristic $3$.
For this purpose we study the fixed-point set for a certain torus action on the moduli space.
\end{abstract}

\section{Introduction}  
\label{section_1}

Let $\M_{\PP^2}(r, \chi)$ denote the moduli space of semi-stable sheaves on $\PP^2 = \PP^2(\CC)$ having
Hilbert polynomial $P(m) = rm+ \chi$. Our goal is to determine the additive structure of the singular homology groups
with coefficients in $\ZZ$ for $\M = \M_{\PP^2}(5,3)$, which, according to \cite{lepotier}, is a smooth projective variety of dimension $26$.
We refer to the introductory section of \cite{choi_maican} for an overview of the present state of research into the geometry
of the moduli spaces $\M_{\PP^2}(r, \chi)$.
The Poincar\'e polynomial of $\M_{\PP^2}(5,3)$ has already been computed
in \cite{yuan} by means of a cellular decomposition.
We will use, instead, the Bia\l{y}nicki-Birula method \cite{birula, birula_polonici, carrell},
which has the advantage of yielding the Hodge numbers, as well.
This method consists of determining the $T$-fixed locus
and the $T$-representation of the tangent spaces at the fixed points for the action of a torus $T$ on a smooth projective variety.
We refer to \cite[Section 2]{choi_maican} for a brief outline of the Bia\l{y}nicki-Birula method.
In \cite[Sections 5, 6]{choi_maican} this method was used to study the homology of $\M_{\PP^2}(4,1)$,
relying on a stratification by strata that are easily understood as geometric quotients.
We will apply the same technique to $\M_{\PP^2}(5,3)$.
We will use the stratification of this moduli space provided in \cite{illinois}, which we recall at the beginning of Section \ref{section_2}.
The action of $T$ on each stratum is easy to study because, as mentioned, the strata are geometric quotients.
More challenging is the problem of determining the $T$-representation of the normal spaces to the strata,
which we solve at Propositions \ref{3.3.1} and \ref{3.4.1}.
Denote
\[
T = (\CC^*)^3 / \{ (c, c, c),\ c \in \CC^* \}.
\]
Let $T$ act on $\PP^2$ by
\[
(t_0, t_1, t_2) \cdot (x_0, x_1, x_2) = (t_0^{-1} x_0^{}, \ t_1^{-1} x_1^{}, \ t_2^{-1} x_2^{}).
\]
Denote by $\mu_t \colon \PP^2 \to \PP^2$ the map of multiplication by $t \in T$.
Our main result below concerns the action of $T$ on $\M_{\PP^2}(5,3)$ given by
\[
t [\F] = [\mu_{t^{-1}}^* \F],
\]
where $[\F]$ denotes the stable-equivalence class of a sheaf $\F$.

\begin{theorem*} 
The fixed point locus of $\M_{\PP^2}(5,3)$ consists of $1329$ isolated points,
$174$ projective lines, and three isomorphic surfaces obtained by blowing up $\PP^1 \times \PP^1$
at three points on the diagonal, then blowing down the strict transform of the diagonal.
The integral homology groups of $\M_{\PP^2}(5,3)$ have no torsion and its Poincar\'e polynomial is
\begin{align*}
P(x) = & x^{52} + 2x^{50}+6x^{48}+13x^{46}+26x^{44}+45x^{42}+68x^{40}+87x^{38}+100x^{36} \\
& +107x^{34}+111x^{32}+112x^{30}+113x^{28}+113x^{26}+113x^{24}+112x^{22}+111x^{20} \\
& +107x^{18}+100x^{16}+87x^{14}+68x^{12}+45x^{10}+26x^8+13x^6+6x^4+2x^2+1.
\end{align*}
The Euler characteristic of $\M_{\PP^2}(5,3)$ is $1695$ and its Hodge numbers satisfy the relations
\[
h^{pq}=0 \quad \text{if} \quad p \neq q \quad \text{and} \quad h^{pp} = b_{2p},
\]
where $b_{2p}$ are the Betti numbers from above.
\end{theorem*}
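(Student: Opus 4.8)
The plan is to apply the Białynicki-Birula method to the torus $T$-action on $\M = \M_{\PP^2}(5,3)$, reducing the computation of the Betti and Hodge numbers to a detailed analysis of the $T$-fixed locus together with the weights of $T$ on the tangent spaces at the fixed points. First I would recall from \cite{illinois} the stratification of $\M$ into locally closed strata that are geometric quotients of open subsets of certain spaces of morphisms of sheaves; since the $T$-action on $\PP^2$ is linear, it lifts to an action on each such space of morphisms and descends to each stratum. On each stratum the fixed locus is therefore accessible: a point $[\F]$ is $T$-fixed precisely when $\mu_{t^{-1}}^*\F \isom \F$ for all $t$, which translates into the monad/resolution of $\F$ being built from $T$-equivariant (i.e.\ monomial) matrices up to the residual change-of-basis action. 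Carrying this out stratum by stratum should produce exactly the $1329$ isolated fixed points, the $174$ projective lines, and the three blown-up/blown-down copies of $\PP^1\times\PP^1$ described in the statement; identifying the surface components requires recognizing that on the relevant stratum the fixed locus is a moduli of certain configurations which, after the quotient, takes that explicit birational form.

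Next I would compute, at each fixed component $Z$, the $T$-equivariant tangent space $T_{[\F]}\M = \Ext^1(\F,\F)$, decomposing it into positive, zero, and negative weight subspaces with respect to a generic one-parameter subgroup $\lambda\colon \CC^*\to T$. The zero-weight part recovers $TZ$; the positive-weight part has dimension equal to the codimension of the attracting cell through $[\F]$, and this is the number that feeds into the Poincaré polynomial via $P(x) = \sum_{Z} x^{2\,d^+(Z)}\,P_Z(x)$, where $P_Z$ is the Poincaré polynomial of $Z$. For the isolated fixed points this is a finite weight count; for the line and surface components I would need the $T$-equivariant structure of the normal bundle, and here I would invoke Propositions \ref{3.3.1} and \ref{3.4.1} (stated as already available), which pin down the $T$-representation on the normal spaces to the strata and hence let me split $\Ext^1(\F,\F)$ correctly even at fixed points lying on positive-dimensional strata. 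Summing $x^{2d^+(Z)}P_Z(x)$ over all components then yields the claimed Poincaré polynomial, and setting $x=1$ gives Euler characteristic $1695$; the absence of torsion in $\H_*(\M;\ZZ)$ follows because the Białynicki-Birula decomposition is a filtrable decomposition into affine cells (for isolated fixed points) and affine bundles over the $Z$'s (in general), so $\M$ has a CW-like structure with cells only in even real dimension once one knows each $Z$ itself has torsion-free even-only homology, which is true for points, $\PP^1$, and the described rational surface.

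Finally, the Hodge-number statement follows formally once the Białynicki-Birula cells are understood as algebraic (the attracting sets are locally closed subvarieties, and each is an affine bundle over its fixed component), since then the decomposition refines the Hodge structure: $\M$ is smooth projective with a decomposition into pieces each of which contributes only $(p,p)$-classes (the base components being of Hodge--Tate type), forcing $h^{pq}=0$ for $p\neq q$ and $h^{pp}=b_{2p}$; more concretely one can cite the standard consequence of \cite{birula} and \cite{carrell} that a smooth projective variety with a torus action whose fixed components and cells are all of Hodge--Tate type has diagonal Hodge diamond. I expect the main obstacle to be the precise determination of the $T$-representation on the normal spaces to the $\M$-strata at non-isolated fixed points — i.e.\ proving Propositions \ref{3.3.1} and \ref{3.4.1} and correctly bookkeeping which tangent directions are attracting versus repelling along the line and surface components — since a sign error or an omitted weight there propagates into every coefficient of $P(x)$; by contrast, the enumeration of isolated fixed points, though lengthy, is a routine (if large) monomial-matrix computation organized by the strata of \cite{illinois}.
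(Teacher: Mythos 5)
Your proposal follows essentially the same route as the paper: the stratification of \cite{illinois} into geometric quotients, the stratum-by-stratum determination of the fixed locus via monomial matrices modulo the group action, the computation of the $T$-weights on tangent spaces (with Propositions \ref{3.3.1} and \ref{3.4.1} supplying the normal directions to the deeper strata), and the Bia\l{y}nicki-Birula/Homology Basis Formula argument of \cite{birula} and \cite{carrell} for torsion-freeness, the Poincar\'e polynomial, and the Hodge-number statement. The only differences are presentational (the paper computes tangent spaces as $\T_\f W_k / \T_\f(G_k\f)$ rather than directly as $\Ext^1(\F,\F)$, and outsources the final weight bookkeeping to a computer program), so your outline matches the paper's proof in substance.
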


\noindent
Our calculation of the Poincar\'e polynomial agrees thus with \cite{yuan}.
Intriguingly, this is the same as the Poincar\'e polynomial of $\M_{\PP^2}(5,1)$,
computed in \cite{yuan} and \cite{choi_chung}.
This raises the question whether $\M_{\PP^2}(5,1)$ and $\M_{\PP^2}(5,3)$ are (canonically) isomorphic.
Such an isomorphism would imply that there is only one smooth moduli space of semi-stable sheaves supported
on plane quintics. Indeed, by duality \cite{rendiconti}, $\M_{\PP^2} (r, \chi) \isom \M_{\PP^2}(r, -\chi)$.
(Thus, what we say in this paper about $\M_{\PP^2}(5,3)$ is equally valid for $\M_{\PP^2}(5,-3)$, $\M_{\PP^2}(5,2)$,
etc.)


\section{The torus fixed locus}
\label{section_2}

For the convenience of the reader we recall from \cite{illinois} the classification of
semi-stable sheaves on $\PP^2$ having Hilbert polynomial $5m+3$.
In $\M_{\PP^2}(5,3)$ we have an open stratum $\M_0$, two locally closed strata $\M_1$
and $\M_2$ of codimension $2$, $3$, and a closed stratum $\M_3$ of codimension $4$.
Each $\M_{i+1}$ is contained in the closure of $\M_i$.
The open stratum consists of sheaves $\F$ having a presentation of the form
\[
0 \lra 2\O(-2) \oplus \O(-1) \stackrel{\f}{\lra} 3\O \lra \F \lra 0,
\]
where $\f$ is not equivalent to a morphism represented by a matrix having one of the following forms:
\[
\left[
\ba{ccc}
\star & \star & \star \\
\star & \star & 0 \\
\star & \star & 0
\ea
\right] \qquad \text{or} \qquad \left[
\ba{ccc}
\star & \star & \star \\
\star & \star & \star \\
\star & 0 & 0
\ea
\right].
\]
We denote by $\M_{01}$ the closed subset of $\M_0$ given by the condition that the entries of $\f_{12}$
span a vector space of dimension $2$ in $V^*$.
Furthermore, inside $\M_{01}$ we distinguish the open subset (in the relative topology)
$\M_{010}$ of cokernels of morphisms
of the form
\[
\left[
\ba{ccc}
\star & \star & l_1 \\
\star & \star & l_2 \\
q_1 & q_2 & 0
\ea
\right],
\]
where $l_1, l_2 \in V^*$ are linearly independent and $q_1$, $q_2$ have no common factor.
The complement $\M_{011} = \M_{01} \setminus \M_{010}$ is given by the condition that $q_1$ and $q_2$
have a common linear factor.

The sheaves giving points in $\M_1$ are precisely those sheaves $\F$ having a resolution of the form
\[
0 \lra 2\O(-2) \oplus 2\O(-1) \stackrel{\f}{\lra} \O(-1) \oplus 3\O \lra \F \lra 0,
\]
where $\f_{12}=0$, $\f_{11}$ has linearly independent entries and $\f_{22}$ has linearly independent
maximal minors. Denote by $\M_{10} \subset \M_1$ the open subset (in the relative topology)
given by the condition that the maximal
minors of $\f_{22}$ have no common factor. The complement $\M_{11} = \M_1 \setminus \M_{10}$
is given by the condition that the maximal minors of $\f_{22}$ have a common linear factor.

The points $[\F]$ in $\M_2$ are given by exact sequences of the form
\[
0 \lra 3\O(-2) \stackrel{\f}{\lra} 2\O(-1) \oplus \O(1) \lra \F \lra 0,
\]
where $\f_{11}$ has linearly independent maximal minors.
Denote by $\M_{20}$ the open subset (in the relative topology) given by the condition that the maximal minors
of $\f_{11}$ have no common factor. The sheaves giving points in $\M_{20}$ are of the form $\O_Q(-\Z)^{\D} (-1)$, where
$\O_Q(-\Z) \subset \O_Q$ is the ideal sheaf of a zero-dimensional scheme $\Z$ of length $3$ contained in a quintic
curve $Q$, where $\Z$ is not contained in a line. We write $\O_Q(\Z)(1) = \O_Q(-\Z)^{\D} (-1)$.
The ideal of $\Z$ is generated by the maximal minors of $\f_{11}$ and $Q$ is given by the equation $\det(\f)=0$.
Denote $\M_{21} = \M_2 \setminus \M_{20}$.
The sheaves $\F$ giving points in $\M_{21}$ are precisely the extension sheaves
\[
0 \lra \O_C(1) \lra \F \lra \O_L \lra 0
\]
satisfying $\H^1(\F)=0$. Here $C$ is a quartic curve and $L$ is a line.
In fact, $L$ is given by the equation $l = 0$, where $l$ is the common divisor of the maximal minors of $\f_{11}$,
and $C$ is given by the equation $\det(\f)/l=0$.

Finally, the sheaves $\F$ in the closed stratum are cokernels of the form
\[
0 \lra \O(-3) \oplus \O(-1) \stackrel{\f}{\lra} \O \oplus \O(1) \lra \F \lra 0,
\]
where $\f_{12}$ is non-zero and does not divide $\f_{22}$. Equivalently, the sheaves giving points in $\M_3$
are of the form $\O_Q(-\Y)(2)$, where $\O_Q(-\Y) \subset \O_Q$ is the ideal sheaf of a zero-dimensional scheme $\Y$
of length $2$ contained in a quintic curve $Q$. Here $\Y$ is given by the ideal $(\f_{12}, \f_{22})$ and $Q$
is the zero-set of $\det(\f)$.

We denote by $W_i$ the set of morphisms $\f$ as above whose cokernels give points in $\M_i$.
The ambient vector space containing $W_i$ is denoted by $\WW_i$.
For instance
\[
\WW_0 = \Hom (2\O(-2) \oplus \O(-1), 3\O).
\]
We denote by $G_0$, $G_1$, $G_2$, $G_3$ the obvious algebraic groups acting by conjugation on $\WW_i$.
For instance
\[
G_0 = (\Aut(2\O(-2) \oplus \O(-1)) \times \Aut(3\O))/\CC^*,
\]
where $\CC^*$ is embedded as the subgroup of homotheties.
According to \cite{illinois}, each $\M_i$ is a geometric quotient of $W_i$ by $G_i$.
In particular, the fibres of the canonical map $W_i \to \M_i$ are precisely the $G_i$-orbits.


\subsection{Fixed points in $\M_0 \setminus \M_{01}$}
\label{2.1}

Consider a $T$-fixed sheaf $\F$ that is the cokernel of a morphism
\[
\f = \left[
\ba{ccc}
q_{11} & q_{12} & X \\
q_{21} & q_{22} & Y \\
q_{31} & q_{32} & Z
\ea
\right]. \quad \text{For all $t \in T$ we have} \quad t \f = \left[
\ba{ccc}
t q_{11} & t q_{12} & t_0 X \\
t q_{21} & t q_{22} & t_1 Y \\
t q_{31} & t q_{32} & t_2 Z
\ea
\right].
\]
Performing, possibly, column operations on $\f$, we may assume that
$q_{11}$ and $q_{12}$ are in $\CC[Y, Z]$.
Since the fibres of the map $W_0 \to \M_0$ are the $G_0$-orbits, we deduce 
that there is $(g(t),h(t)) \in G_0$ such that $t\f = h(t) \f g(t)$.
Clearly, we may assume that
\[
g(t) = \left[
\ba{ccc}
g_{11} & g_{12} & 0 \\
g_{21} & g_{22} & 0 \\
u_1 & u_2 & 1
\ea
\right], \quad \quad h(t) = \left[
\ba{ccc}
t_0 & 0 & 0 \\
0 & t_1 & 0 \\
0 & 0 & t_2
\ea
\right].
\]
From the relations
\begin{align*}
t q_{11} & = t_0 (q_{11} g_{11} + q_{12} g_{21} + X u_1), \\
t q_{12} & = t_0 (q_{11} g_{12} + q_{12} g_{22} + X u_2)
\end{align*}
we deduce that $u_1 = 0$ and $u_2=0$.
Writing
\[
q_{11} = a_1 Y^2 + b_1 Z^2 + c_1 YZ, \qquad q_{12} = a_2 Y^2 + b_2 Z^2 + c_2 YZ,
\]
the above relations are equivalent to the equation
\[
\left[
\ba{ccc}
t_1^2 & 0 & 0 \\
0 & t_2^2 & 0 \\
0 & 0 & t_1 t_2
\ea
\right] \left[
\ba{cc}
a_1 & a_2 \\
b_1 & b_2 \\
c_1 & c_2
\ea
\right] = t_0 \left[
\ba{cc}
a_1 & a_2 \\
b_1 & b_2 \\
c_1 & c_2
\ea
\right] \left[
\ba{cc}
g_{11} & g_{12} \\
g_{21} & g_{22}
\ea
\right].
\]


\subsubsection{Case when $q_{11}$, $q_{12}$ are linearly independent modulo $X$}
\label{2.1.1}

We may assume a priori that one of the matrices
\[
\left[
\ba{cc}
a_1 & a_2 \\
b_1 & b_2
\ea
\right], \qquad \left[
\ba{cc}
a_1 & a_2 \\
c_1 & c_2
\ea
\right], \qquad \left[
\ba{cc}
b_1 & b_2 \\
c_1 & c_2
\ea
\right].
\]
is the identity matrix. In the first case we have
\[
\left[
\ba{cc}
g_{11} & g_{12} \\
g_{21} & g_{22}
\ea
\right] = \left[
\ba{cc}
t_0^{-1} t_1^2 & 0 \\
0 & t_0^{-1} t_2^2
\ea
\right].
\]
For any $t \in T$ we have $t q_{21} = t_0^{-1} t_1^3 q_{21}$. This shows that $q_{21}=0$.
Analogously $q_{22}=0$, $q_{31}=0$, $q_{32}=0$, hence $\det(\f)=0$, which contradicts our
choice of $\f$. In the other two cases we arrive at the same contradiction.
We conclude that there are no $T$-fixed points in $\M_0 \setminus \M_{01}$ for which
$q_{11}$ and $q_{12}$ are linearly independent modulo $X$.
Analogously, there are no fixed points for which $q_{21}$, $q_{22}$ are linearly independent
modulo $Y$ or for which $q_{31}$, $q_{32}$ are linearly independent modulo $Z$.


\subsubsection{Case when $q_{11}$, $q_{12}$ are linearly dependent but not both zero modulo $X$}
\label{2.1.2}

Assume that $q_{12}=0$. For each $t \in T$, $t q_{11} = t_0 q_{11} g_{11}(t)$, hence $q_{11}$ is a monomial,
say $Y^j Z^k$, and $g_{11}(t) = t_0^{-1} t_1^j t_2^k$.
Since $0= t q_{12} = t_0 q_{11} g_{12}$, we get $g_{12}=0$.
Thus $t q_{22} = t_1 q_{22} g_{22}$ and $t q_{32} = t_2 q_{32} g_{22}$, hence $q_{22}$, $q_{32}$
are monomials or zero. We have the relations
\begin{align*}
t q_{21} & = t_0^{-1} t_1^{j+1} t_2^k q_{21} + t_1 q_{22} g_{21}, \\
t q_{31} & = t_0^{-1} t_1^j t_2^{k+1} q_{31} + t_2 q_{32} g_{21}.
\end{align*}
Choosing $t$ such that $t_0^{-1} t_1^{j+1} t_2^k$, $t_0^{-1} t_1^j t_2^{k+1}$ are different from
$t_0^2$, $t_1^2$, $t_2^2$, $t_0^{} t_1^{}$, $t_0^{} t_2^{}$, $t_1^{} t_2^{}$, we see that $q_{21}$ is a multiple
of $q_{22}$ if $q_{22} \neq 0$ and $q_{31}$ is a multiple of $q_{32}$ if $q_{32} \neq 0$.
In the first case, performing possibly column operations on $\f$, we may assume a priori that $q_{21} =0$.
From the first relation above we get $g_{21} =0$.
Likewise, in the second case, we may assume that $g_{21}=0$.
For all $t \in T$ we have
\[
t q_{21} = t_0^{-1} t_1^{j+1} t_2^k q_{21}, \qquad t q_{31} = t_0^{-1} t_1^{j} t_2^{k+1} q_{31},
\]
forcing $q_{21} =0$, $q_{31}=0$.
If both $q_{22}$ and $q_{32}$ are non-zero, then
\[
t \frac{q_{22}}{q_{32}} = \frac{t q_{22}}{t q_{32}} =
\frac{t_1 q_{22} g_{22}}{t_2 q_{32} g_{22}} = t_1^{} t_2^{-1} \frac{q_{22}}{q_{32}}, \quad \text{so} \quad
\frac{q_{22}}{q_{32}} \quad \text{is a multiple of} \quad YZ^{-1},
\]
i.e. there are $l \in \{ X, Y, Z \}$ and $a, b \in \CC^*$ such that $q_{22} = a l Y$, $q_{32} = b l Z$.
We obtain nine isolated $T$-fixed points represented by the matrices
\[
{\mathversion{bold}
\boldsymbol{\a(q_1, q_2)} =
\left[
\ba{ccc}
q_1 & 0 & X \\
0 & q_2 & Y \\
0 & 0 & Z
\ea
\right]}, \quad q_1 \in \{ Y^2, YZ, Z^2 \}, \quad q_2 \in \{ X^2, XZ, Z^2 \},
\]
and eighteen other points if we swap $X$ and $Z$, respectively $Y$ and $Z$.
We obtain nine affine lines of $T$-fixed points represented by the matrices
\[
{\mathversion{bold}
\boldsymbol{\b(q, l)(a,b)} =
\left[
\ba{ccc}
q & 0 & X \\
0 & a lY & Y \\
0 & b lZ & Z
\ea
\right]}, \ q \in \{ Y^2, YZ, Z^2 \}, \ l \in \{X, Y, Z \}, \ (a,b) \in \PP^1 \setminus \{ 1 \},
\]
and eighteen other lines if we swap $X$ and $Y$, respectively $X$ and $Z$.


\subsubsection{Case when $q_{11}$, $q_{12}$ are divisible by $X$,
$q_{21}$, $q_{22}$ are divisible by $Y$, $q_{31}$, $q_{32}$ are divisible by $Z$}
\label{2.1.3}

We may write
\[
\f = \left[
\ba{ccc}
X l_{11} & X l_{12} & X \\
Y l_{21} & Y l_{22} & Y \\
0 & 0 & Z
\ea
\right].
\]
We have the relation
\[
\left[
\ba{ccc}
t_0 X (t l_{11}) & t_0 X (t l_{12}) & t_0 X \\
t_1 Y (t l_{21}) & t_1 Y (t l_{22}) & t_1 Y \\
0 & 0 & t_2 Z
\ea
\right] = \left[
\ba{ccc}
t_0 & 0 & 0 \\
0 & t_1 & 0 \\
0 & 0 & t_2
\ea
\right] \f \left[
\ba{ccc}
g_{11} & g_{12} & 0 \\
g_{21} & g_{22} & 0 \\
u_1 & u_2 & 1
\ea
\right].
\]
From the relations $0 = t_2 Z u_1$ and $0 = t_2 Z u_2$ we see that $u_1=0$, $u_2=0$.
Thus, the above is equivalent to the relation
\[
\left[
\ba{cc}
t l_{11} & t l_{12} \\
t l_{21} & t l_{22}
\ea
\right] = \left[
\ba{cc}
l_{11} & l_{12} \\
l_{21} & l _{22}
\ea
\right] \left[
\ba{cc}
g_{11} & g_{12} \\
g_{21} & g_{22}
\ea
\right].
\]
Write
\[
l_{11} = a_1 X + b_1 Y + c_1 Z, \qquad l_{12} = a_2 X + b_2 Y + c_2 Z,
\]
From the above relation we get the equation
\[
\left[
\ba{ccc}
t_0 & 0 & 0 \\
0 & t_1 & 0 \\
0 & 0 & t_2
\ea
\right] \left[
\ba{cc}
a_1 & a_2 \\
b_1 & b_2 \\
c_1 & c_2
\ea
\right] = t_0 \left[
\ba{cc}
a_1 & a_2 \\
b_1 & b_2 \\
c_1 & c_2
\ea
\right] \left[
\ba{cc}
g_{11} & g_{12} \\
g_{21} & g_{22}
\ea
\right].
\]
Assume that $l_{11}$ and $l_{12}$ are linearly independent.
Performing possibly column operations on $\f$,
we may assume a priori that one of the matrices
\[
\left[
\ba{cc}
a_1 & a_2 \\
b_1 & b_2
\ea
\right], \qquad \left[
\ba{cc}
a_1 & a_2 \\
c_1 & c_2
\ea
\right], \qquad \left[
\ba{cc}
b_1 & b_2 \\
c_1 & c_2
\ea
\right].
\]
is the identity matrix. In the first case we have
\[
\left[
\ba{cc}
g_{11} & g_{12} \\
g_{21} & g_{22}
\ea
\right] = \left[
\ba{cc}
t_0 & 0 \\
0 & t_1
\ea
\right].
\]
For any $t_0, t_1, t_2 \in \CC^*$ we have $t_2 c_1 = t_0 c_1$ and $t_2 c_2 = t_1 c_2$.
Thus $c_1 =0$, $c_1=0$, $l_{11}=X$, $l_{12}=Y$.
For all $t \in T$ we have $t l_{21} = t_0 l_{21}$, $t l_{22} = t_1 l_{22}$,
hence $l_{21} = a X$, $l_{22} = b Y$, where $a , b \in \CC$.
We obtain the fixed points represented by the matrices
\[
{\mathversion{bold}
\boldsymbol{\g(a,b)}= \left[
\ba{ccc}
X^2 & XY & X \\
aXY & b Y^2 & Y \\
0 & 0 & Z
\ea
\right], \qquad a, b \in \CC, \quad a \neq b.}
\]
For the other two cases we obtain the matrices
\[
\left[
\ba{ccc}
X^2 & XZ & X \\
a XY & b YZ & Y \\
0 & 0 & Z
\ea
\right], \quad \text{respectively} \quad \left[
\ba{ccc}
XY & XZ & X \\
a Y^2 & b YZ & Y \\
0 & 0 & Z
\ea
\right].
\]
Note that $\g(a,b)$ is defined for distinct $a, b \in \PP^1$, if we set
\[
\g(\infty,b)= \left[
\ba{ccc}
0 & XY & X \\
XY & b Y^2 & Y \\
0 & 0 & Z
\ea
\right], \quad \text{respectively} \quad \g(a,\infty)= \left[
\ba{ccc}
X^2 & 0 & X \\
aXY & Y^2 & Y \\
0 & 0 & Z
\ea
\right].
\]
Thus we get a surface $\g(a, b)$, $a, b \in \PP^1$, $a \neq b$, of $T$-fixed points in $\M_{\PP^2}(5,3)$
isomorphic to the complement of the diagonal in $\PP^1 \times \PP^1$.
We get two other surfaces of fixed points if we swap $X$ and $Z$, respectively if we swap $Y$ and $Z$.
Note that we have also covered the situation when $l_{21}$ and $l_{22}$ are linearly independent.
It remains to examine the situation when $l_{11}$, $l_{12}$ are linearly dependent and, likewise,
$l_{21}$, $l_{22}$ are linearly dependent. If we discount $\g(0, \infty)$, $\g(\infty, 0)$ and four other points
obtained by interchanging $X$ and $Z$, respectively by interchanging $Y$ and $Z$, leaves us with three
isolated $T$-fixed points represented by the matrices
\[
{\mathversion{bold}
\boldsymbol{\a(q_1, q_2)} = \left[
\ba{ccc}
q_1 & 0 & X \\
0 & q_2 & Y \\
0 & 0 & Z
\ea
\right], \quad (q_1, q_2) \in \{ (X^2, XY), \ (XY, Y^2), \ (XZ, YZ) \}.}
\]


\subsection{Fixed points in $\M_{01}$}
\label{2.2}

Assume that the point in $\M_{\PP^2}(5,3)$ represented by the morphism
\[
\f = \left[
\ba{ccc}
q_{11} & q_{12} & l_1 \\
q_{21} & q_{22} & l_2 \\
q_1 & q_2 & 0
\ea
\right]
\]
is fixed by $T$. Then, as noted before, for each $t \in T$, there is $(g(t), h(t)) \in G_0$ such that $t \f = h(t) \f g(t)$.
We may write
\[
g(t) = \left[
\ba{ccc}
g_{11} & g_{12} & 0 \\
g_{21} & g_{22} & 0 \\
u_1 & u_2 & 1
\ea
\right], \qquad h(t) = \left[
\ba{ccc}
h_{11} & h_{12} & h_{13} \\
h_{21} & h_{22} & h_{23} \\
h_{31} & h_{32} & h_{33}
\ea
\right].
\]
From the relation $0 = h_{31} l_1 + h_{32} l_2$ we get $h_{31} = 0$, $h_{32}=0$.
We have the relation
\[
\left[
\ba{c}
t l_1 \\
t l_2
\ea
\right] = \left[
\ba{cc}
h_{11} & h_{12} \\
h_{21} & h_{22}
\ea
\right] \left[
\ba{c}
l_1 \\
l_2
\ea
\right].
\]
We can argue as at \ref{2.1.3} to deduce that $l_1$ and $l_2$ are distinct elements in the set $\{ X, Y, Z \}$
and that
\[
\left[
\ba{cc}
h_{11} & h_{12} \\
h_{21} & h_{22}
\ea
\right] = \left[
\ba{cc}
(t l_1)/l_1 & 0 \\
0 & (t l_2)/l_2
\ea
\right].
\]
We have the relation
\[
\left[
\ba{cc}
t q_1 & t q_2
\ea
\right] = h_{33} \left[
\ba{cc}
q_1 & q_2
\ea
\right] \left[
\ba{cc}
g_{11} & g_{12} \\
g_{21} & g_{22}
\ea
\right].
\]
Arguing as at \ref{2.1.1}, we can show that $q_1$ and $q_2$ are distinct elements in the set
$\{ X^2, Y^2, Z^2, XY, XZ, YZ \}$ and that
\[
\left[
\ba{cc}
g_{11} & g_{12} \\
g_{21} & g_{22}
\ea
\right] = h_{33}^{-1} \left[
\ba{cc}
(t q_1)/q_1 & 0 \\
0 & (t q_2)/q_2
\ea
\right].
\]
One linear form among $l_1$, $l_2$ does not divide $q_1$ or $q_2$, otherwise $q_1$ and $q_2$ would
be both equal to $l _1 l_2$.
Performing, possibly, elementary operations on $\f$, we may assume that $l_1$ does not divide $q_1$,
that $q_{11}$ and $q_{12}$ do not contain any monomial divisible by $l_1$, and that $q_{11}$ and $q_{21}$
do not contain the monomial $q_1$.
From the relation
\[
t q_{11} = \left( \frac{t l_1}{l_1} q_{11} + h_{13} q_1 \right) h_{33}^{-1} \frac{t q_1}{q_1} + (t l_1) u_1
\]
we get $h_{13}=0$, $u_1=0$. From the relation
\[
t q_{12} = \left( \frac{t l_1}{l_1} q_{12} + h_{13} q_2 \right) h_{33}^{-1} \frac{t q_2}{q_2} + (t l_1) u_2
= \frac{t l_1}{l_1} q_{12} h_{33}^{-1} \frac{t q_2}{q_2} + (t l_1) u_2
\]
we get $u_2 = 0$. From the relation
\[
t q_{21} = \left( \frac{t l_2}{l_2} q_{21} + h_{23} q_1 \right) h_{33}^{-1} \frac{t q_1}{q_1} + (t l_2) u_1
= \left( \frac{t l_2}{l_2} q_{21} + h_{23} q_1 \right) h_{33}^{-1} \frac{t q_1}{q_1}
\]
we get $h_{23}=0$. We deduce that the equation $t \f = h(t) \f g(t)$ is equivalent to the relation
\[
\left[
\ba{cc}
t q_{11} & t q_{12} \\
t q_{21} & t q_{22}
\ea
\right] = h_{33}^{-1} \left[
\ba{cc}
(t l_1)/l_1 & 0 \\
0 & (t l_2)/l_2
\ea
\right] \left[
\ba{cc}
q_{11} & q_{12} \\
q_{21} & q_{22}
\ea
\right] \left[
\ba{cc}
(t q_1)/q_1 & 0 \\
0 & (t q_2)/q_2
\ea
\right].
\]
It becomes clear now that $q_{11}$, $q_{12}$, $q_{21}$, $q_{22}$ are monomials or zero and that
$h_{33}(t) = t_0^i t_1^j t_2^k$ for some integers $i$, $j$, $k$ satisfying $i+j+k=1$.
In fact,
\[
\left[
\ba{cc}
q_{11} & q_{12} \\
q_{21} & q_{22}
\ea
\right] = \left[
\ba{cc}
c_{11} X^{-i} Y^{-j} Z^{-k} l_1 q_1 & c_{12} X^{-i} Y^{-j} Z^{-k} l_1 q_2 \\
c_{21} X^{-i} Y^{-j} Z^{-k} l_2 q_1 & c_{22} X^{-i} Y^{-j} Z^{-k} l_2 q_2
\ea
\right],
\]
where $c_{rs} = 0$ if the corresponding monomial has negative exponents.

\noindent \\
Given $l_1$, $l_2$, $q_1$, $q_2$ as above, and a monomial $d$ of degree $5$,
we denote by $\M(l_1, l_2, q_1, q_2)$ the image in $\M_{\PP^2}(5,3)$ of the set of morphisms
of the form
\[
\f = \left[
\ba{ccc}
\star & \star & l_1 \\
\star & \star & l_2 \\
q_1 & q_2 & 0
\ea
\right],
\]
and by $\M(l_1, l_2, q_1, q_2, d)$ the subset given by the additional condition that $\det(\f)$ be a multiple of $d$.
Clearly, this sets are $T$-invariant.

\begin{proposition}
\label{2.2.1}
Assume that $l_1, l_2 \in \{ X, Y, Z \}$ are distinct one-forms and $q_1, q_2 \in \{ X^2, Y^2, Z^2, XY, XZ, YZ \}$
are distinct two-forms. Then, for any monomial $d$ of degree $5$ belonging to the ideal
$(l_1 q_1, l_1 q_2, l_2 q_1, l_2 q_2)$,
the set of fixed points for the action of $T$ on $\M(l_1, l_2, q_1, q_2, d)$ has precisely one irreducible
component, which is either a point or an affine line.
\end{proposition}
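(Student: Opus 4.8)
The plan is to build directly on the normal form for $T$-fixed morphisms in $\M_{01}$ established above: such a morphism is represented by a matrix $\f$ with $q_{rs}=c_{rs}X^{-i}Y^{-j}Z^{-k}l_rq_s$, where $c_{rs}\in\CC$ vanishes whenever the displayed monomial has a negative exponent and $i+j+k=1$. First I would record its determinant; expanding along the last column and substituting gives
\[
\det(\f)=(c_{12}+c_{21}-c_{11}-c_{22})\,X^{-i}Y^{-j}Z^{-k}\,l_1l_2q_1q_2,
\]
a scalar multiple of a fixed monomial of degree $5$. Since $l_1l_2q_1q_2$ has degree $6$ while $d$ has degree $5$, the requirement that $\det(\f)$ be a nonzero multiple of $d$ is equivalent to the single equation $X^iY^jZ^k=l_1l_2q_1q_2/d$ together with $c_{11}+c_{22}\neq c_{12}+c_{21}$. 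In particular $(i,j,k)$ is forced by $d$, and with it the set $S$ of those indices $(r,s)$ for which the monomial $X^{-i}Y^{-j}Z^{-k}l_rq_s$ has nonnegative exponents; $S$ is nonempty because $d$ belongs to the ideal $(l_1q_1,l_1q_2,l_2q_1,l_2q_2)$.

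Next I would describe the residual group $R_0$, namely the subgroup of $G_0$ carrying a morphism of the form in the statement, with the given $l_1,l_2,q_1,q_2$, to another of the same form. Exactly as in the matrix computations preceding the proposition, the last column and the last row pin down $(g,h)$ up to a scalar $h_{33}\in\CC^*$ and the off-diagonal entries $u_1,u_2,h_{13},h_{23}$, which are in turn constrained by the requirement of preserving monomial entries, and one checks that $R_0$ acts on the vector $(c_{rs})_{(r,s)\in S}\in\CC^S$ affinely, with linear part the scaling $c\mapsto h_{33}^{-1}c$ and translation part a linear subspace $U\subseteq\CC^S$. Because every element of $G_0$ multiplies $\det(\f)$ by a nonzero constant, $R_0$ preserves the hyperplane $H=\{\,c_{11}+c_{22}=c_{12}+c_{21}\,\}$ of $\CC^S$, whence $U\subseteq H$. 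As $H$ passes through the origin and contains $U$, quotienting first by the translations and then by the scaling yields
\[
\M(l_1,l_2,q_1,q_2,d)^T=(\CC^S\setminus H)/R_0\;\isom\;\PP(\CC^S/U)\setminus\PP(H/U)\;\isom\;\AA^{\,|S|-\dim U-1},
\]
which is irreducible and nonempty.

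It then remains to show $|S|-\dim U-1\in\{0,1\}$, i.e.\ $\dim U\ge|S|-2$ (the opposite inequality $\dim U\le|S|-1$ being immediate from $U\subseteq H$). This is vacuous for $|S|\le2$. For $|S|\ge3$ one of the two columns of $\f$, say the one whose bottom entry is $q_{s_0}$, has both of its row-indices in $S$; the decisive elementary point is that if $X^{-i}Y^{-j}Z^{-k}l_1q_{s_0}$ and $X^{-i}Y^{-j}Z^{-k}l_2q_{s_0}$ both have nonnegative exponents, then so does $X^{-i}Y^{-j}Z^{-k}q_{s_0}$, since $l_1\neq l_2$ are distinct variables. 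Writing $q_{s_0}=X^iY^jZ^k n$ with $n$ a linear form, adding $n$ times the last column of $\f$ to the $q_{s_0}$-column preserves the normal form and produces a nontrivial translation in $U$ which adds a common scalar to $c_{1,s_0}$ and $c_{2,s_0}$; when $|S|=4$ both columns contribute such translations and they are linearly independent. Hence $\dim U\ge1$ if $|S|=3$ and $\dim U\ge2$ if $|S|=4$, so $|S|-\dim U-1\le1$ in every case, and $\M(l_1,l_2,q_1,q_2,d)^T$ is a single point or a single affine line.

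The bulk of the work, and the main obstacle, lies in the second step: pinning down the exact shape of $R_0$ and of the translation subspace $U$ — which runs in close parallel to the matrix manipulations already performed in this section — and, alongside it, checking that every morphism of this monomial shape with $\det(\f)\neq0$ genuinely gives a point of $\M_{01}$, so that the fixed locus is the full quotient $(\CC^S\setminus H)/R_0$ and not a proper, possibly disconnected, subset of it; this last point follows from the description of $\M_0$ recalled at the start of the section. The divisibility observation of the third step is then the essential combinatorial input that rules out an a priori two-dimensional component.
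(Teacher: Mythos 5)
Your argument is correct, but it takes a genuinely different route from the paper. The paper proves Proposition \ref{2.2.1} by brute-force enumeration: it fixes the representative case $l_1=X$, $l_2=Y$, $q_1=Z^2$, $q_2=XZ$, runs through the cases $c_{12}\neq 0$; $c_{12}=0, c_{11}\neq 0$; etc., lists every admissible triple $(i,j,k)$ and hence every normal-form matrix, and reads off directly which $d$ give isolated points and which give the affine lines $\f_1,\f_2,\f_3$ (declaring the other choices of $(l_1,l_2,q_1,q_2)$ analogous). You instead argue uniformly: the determinant computation pins down $(i,j,k)$ and identifies the fixed locus of $\M(l_1,l_2,q_1,q_2,d)$ with $(\CC^S\setminus H)/(\CC^*\ltimes U)\isom \PP(\CC^S/U)\setminus\PP(H/U)\isom\AA^{|S|-\dim U-1}$, and the divisibility observation (if $ml_1q_{s_0}$ and $ml_2q_{s_0}$ are both polynomial then so is $mq_{s_0}$, $m=X^{-i}Y^{-j}Z^{-k}$) supplies the column translations giving $\dim U\ge |S|-2$, while $U\subseteq H$ gives $\dim U\le |S|-1$; this is sound, and I checked it reproduces the paper's answers (e.g.\ a line for $d=X^2YZ^2$, a point for $d=XYZ^3$ in the worked case). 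What your approach buys is uniformity and an explanation of why the answer is always $0$- or $1$-dimensional, with no ``all other cases are analogous''; what the paper's enumeration buys is the explicit list of fixed-point representatives and of the $d$ for which one gets a line, which is exactly the data consumed later (Table 1, the limit identifications in \ref{2.2.2}, the weight lists in Section \ref{3.1}, and the Singular input), so your proof would still need to be supplemented by that bookkeeping. Two small points of care in your write-up: the identification of the fixed locus with the quotient also needs the (easy, and automatic) check that any $(g,h)\in G_0$ carrying one normal-form matrix to another has vanishing off-diagonal blocks, so that the induced equivalence on the coefficients $(c_{rs})$ really is of the affine form $c\mapsto\lambda c+\tau$ with $\tau$ in your translation space $U$ — this is the injectivity counterpart of the membership check you do flag; and the linear part is scaling by $(g_{33}h_{33})^{-1}$, which equals $h_{33}^{-1}$ only after the normalization $g_{33}=1$ used earlier in the section.
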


\begin{proof}
We will only examine the case when $l_1 = X$, $l_2=Y$, $q_1=Z^2$, $q_2 = XZ$, all other cases being analogous.
Consider, therefore, a morphism of the form
\[
\f = \left[
\ba{ccc}
c_{11} X^{1-i} Y^{-j} Z^{2-k} & c_{12} X^{2-i} Y^{-j} Z^{1-k} & X \\
c_{21} X^{-i} Y^{1-j} Z^{2-k} & c_{22} X^{1-i} Y^{1-j} Z^{1-k} & Y \\
Z^2 & XZ & 0
\ea
\right],
\]
where $i+j+k = 1$. Assume that $c_{12} \neq 0$. Then $i=2$, $c_{11}=0$, $c_{21}=0$, $c_{22}=0$,
so we get three isolated fixed points:
\[
\left[
\ba{ccc}
0 & Y^2 & X \\
0 & 0 & Y \\
Z^2 & XZ & 0
\ea
\right], \qquad 
\left[
\ba{ccc}
0 & YZ & X \\
0 & 0 & Y \\
Z^2 & XZ & 0
\ea
\right], \qquad
\left[
\ba{ccc}
0 & Z^2 & X \\
0 & 0 & Y \\
Z^2 & XZ & 0
\ea
\right].
\]
Assume now that $c_{12} = 0$ and $c_{11} \neq 0$.
Then $i=1$, $c_{21} =0$, $j+k=0$, $j \le 0$, $k \le 2$.
Thus $(j,k)$ is one of the following pairs: $(0,0)$, $(-1,1)$, $(-2,2)$.
The first case is not feasible because of our convention that $q_{11}$ do not contain
the monomial $q_1$. We obtain the fixed points
\[
\f_1(c) = \left[
\ba{ccc}
YZ & 0 & X \\
0 & cY^2 & Y \\
Z^2 & XZ & 0
\ea
\right], \quad c \in \CC \setminus \{ -1 \}, \qquad
\left[
\ba{ccc}
Y^2 & 0 & X \\
0 & 0 & Y \\
Z^2 & XZ & 0
\ea
\right].
\]
Assume next that $c_{11}=0$, $c_{12}=0$, $c_{21} \neq 0$.
Then $i \le 0$, $j \le 1$, $k \le 2$, hence $(i,j,k)$ is one of the following triples:
$(0,1,0)$, $(0,0,1)$, $(-1,1,1)$, $(0,-1,2)$, $(-1,0,2)$, $(-2,1,2)$.
The first case is not feasible because of our convention that $q_{21}$ do not contain
the monomial $q_1$. We obtain the fixed points
\[
\f_2(c) = \left[
\ba{ccc}
0 & 0 & X \\
YZ & cXY & Y \\
Z^2 & XZ & 0
\ea
\right], \qquad
\f_3(c) = \left[
\ba{ccc}
0 & 0 & X \\
XZ & cX^2 & Y \\
Z^2 & XZ & 0
\ea
\right], \quad c \in \CC \setminus \{ 1 \},
\]
\[
\left[
\ba{ccc}
0 & 0 & X \\
Y^2 & 0 & Y \\
Z^2 & XZ & 0
\ea
\right], \qquad \left[
\ba{ccc}
0 & 0 & X \\
XY & 0 & Y \\
Z^2 & XZ & 0
\ea
\right], \qquad \left[
\ba{ccc}
0 & 0 & X \\
X^2 & 0 & Y \\
Z^2 & XZ & 0
\ea
\right].
\]
The final case to examine is when $c_{11}=0$, $c_{12}=0$, $c_{21}=0$, $c_{22} \neq 0$.
We have $i \le 1$, $j \le 1$, $k \le 1$, hence $(i,j,k)$ belongs to the list
$(1,1,-1)$, $(1,0,0)$, $(1,-1,1)$, $(0,1,0)$, $(0,0,1)$, $(-1,1,1)$.
We get the fixed points
\[
\left[
\ba{ccc}
0 & 0 & X \\
0 & Z^2 & Y \\
Z^2 & XZ & 0
\ea
\right], \quad \phantom{\f_2(\infty) =} \left[
\ba{ccc}
0 & 0 & X \\
0 & YZ & Y \\
Z^2 & XZ & 0
\ea
\right], \quad \f_1(\infty) = \left[
\ba{ccc}
0 & 0 & X \\
0 & Y^2 & Y \\
Z^2 & XZ & 0
\ea
\right],
\]
\[
\left[
\ba{ccc}
0 & 0 & X \\
0 & XZ & Y \\
Z^2 & XZ & 0
\ea
\right], \quad \f_2(\infty) = \left[
\ba{ccc}
0 & 0 & X \\
0 & XY & Y \\
Z^2 & XZ & 0
\ea
\right], \quad \f_3(\infty) = \left[
\ba{ccc}
0 & 0 & X \\
0 & X^2 & Y \\
Z^2 & XZ & 0
\ea
\right].
\]
In conclusion, for the action of $T$ on $\M(X, Y, Z^2, XZ)$,
we have nine fixed isolated points and three affine lines, namely
\[
\{ [\f_1(c)], \ c \in \PP^1 \setminus \{ -1 \} \}, \quad
\{ [\f_2(c)], \ c \in \PP^1 \setminus \{ 1 \} \}, \quad
\{ [\f_3(c)], \ c \in \PP^1 \setminus \{ 1 \} \}.
\qedhere
\]
\end{proof}

\noindent
We denote by $\boldsymbol{\d(l_1,l_2,q_1,q_2,d)}$ the set of $T$-fixed points in $\M(l_1,l_2,q_1,q_2,d)$.
The list of fixed affine lines can be found in Table 1 at the end of this section.

\setcounter{subsubsection}{1}


\subsubsection{Limits of sequences of points in $\M_0 \setminus \M_{01}$}
\label{2.2.2}

By an abuse of notation, in the sequel a matrix will denote its image in the moduli space.
We have
\begin{align*}
\lim_{a \to 1} \b(q,l)(a,1) & = \lim_{a \to 1} \left[
\ba{ccc}
q & 0 & X \\
0 & alY & Y \\
0 & lZ  & Z
\ea
\right] = \lim_{a \to 1} \left[
\ba{ccc}
q & -lX & X \\
0 & (a-1) lY & Y \\
0 & 0  & Z
\ea
\right] \\
& = \lim_{c \to 0} \left[
\ba{ccc}
q & -lX & cX \\
0 & lY & Y \\
0 & 0 & Z
\ea
\right] = \left[
\ba{ccc}
q & lX & 0 \\
0 & lY & Y \\
0 & 0 & Z
\ea
\right] \\
& = \d(Y, Z, q, lX, qlYZ).
\end{align*}
Thus, each of the $27$ affine fixed lines in $\M_0 \setminus \M_{01}$ contains a point from $\M_{01}$
in its closure. We obtain $27$ irreducible components of the $T$-fixed locus isomorphic to $\PP^1$.
Denote by $S$ the closure of the set $\{ \g(a,b) \mid a,b \in \PP^1, \ a \neq b \}$.
Assume that $a, b \in \CC$ are distinct. We have
\begin{align*}
\lim_{c \to 0} \g(ca,cb) & = \lim_{c \to 0} \left[
\ba{ccc}
X^2 & XY & X \\
acXY & bcY^2 & Y \\
0 & 0 & Z
\ea
\right] = \lim_{c \to 0} \left[
\ba{ccc}
X^2 & XY & cX \\
aXY & bY^2 & Y \\
0 & 0 & Z
\ea
\right] \\
& = \phantom{\lim_{c \to 0}} \left[
\ba{ccc}
X^2 & XY & 0 \\
aXY & bY^2 & Y \\
0 & 0 & Z
\ea
\right] = \d(Y, Z, X^2, XY, X^2 Y^2 Z)(a,b).
\end{align*}
\begin{align*}
\lim_{c \to 0} \g(ca+1, cb+1) & = \lim_{c \to 0} \left[
\ba{ccc}
X^2 & XY & X \\
(ca+1)XY & (cb+1)Y^2 & Y \\
0 & 0 & Z
\ea
\right] \\
& = \lim_{c \to 0} \left[
\ba{ccc}
0 & 0 & X \\
ca XY & cb Y^2 & Y \\
-XZ & -YZ & Z
\ea
\right] = \lim_{c \to 0} \left[
\ba{ccc}
0 & 0 & X \\
a XY & b Y^2 & Y \\
XZ & YZ & cZ
\ea
\right] \\
& = \phantom{\lim_{c \to 0}} \left[
\ba{ccc}
0 & 0 & X \\
a XY & b Y^2 & Y \\
XZ & YZ & 0
\ea
\right] = \d(X,Y,XZ, YZ,X^2Y^2Z)(a,b).
\end{align*}
Assume that $a, b \in \CC$ are distinct and non-zero. We have
\begin{align*}
\lim_{c \to \infty} \g(ca, cb) & = \lim_{c \to \infty} \left[
\ba{ccc}
X^2 & XY & X \\
caXY & cbY^2 & Y \\
0 & 0 & Z
\ea
\right] = \lim_{c \to \infty} \left[
\ba{ccc}
a^{-1} X^2 & b^{-1} XY & X \\
XY & Y^2 & c^{-1} Y \\
0 & 0 & Z
\ea
\right] \\
& = 
\left[
\ba{ccc}
a^{-1} X^2 & b^{-1} XY & X \\
XY & Y^2 & 0 \\
0 & 0 & Z
\ea
\right] = \d(X,Z,XY,Y^2,X^2Y^2Z)(a^{-1}, b^{-1}).
\end{align*}
Thus, $S$ contains three affine lines of fixed points for the action of $T$ on $\M_{01}$
denoted $\d_1$, $\d_2$, $\d_3$.
Denote
\[
S_0 = \{ \g(a,b) \mid a,b \in \PP^1, \ a \neq b \} \cup \d_1 \cup \d_2 \cup \d_3.
\]
Denote by $\Delta$ the diagonal of $\PP^1 \times \PP^1$.
From the above calculations it is clear that $S_0$ is isomorphic to an open subset of the
blow-up $B$ of $\PP^1 \times \PP^1$ at $\{ (0,0), (1,1), (\infty, \infty) \}$.
In fact, $S_0 \isom B \setminus \widetilde{\Delta}$, where $\widetilde{\Delta}$ is the strict transform
of $\Delta$. In Section \ref{section_3} we will show that $S \setminus S_0$ consists of a single point (in $\M_1$).
It follows that $S$ is isomorphic to the blow-down of $B$ along $\widetilde{\Delta}$.

From what was said above, we have a complete picture of the fixed locus for the action of $T$ on $\M_0$.
There are thirty isolated points of the form $\a(q_1,q_2)$, twenty-seven projective lines that are the closure
of the affine lines $\b(q,l)$, three surfaces isomorphic to $B \setminus \widetilde{\Delta}$ and a number of
isolated points and affine lines of the form $\d$.
The information about the latter is summarised in the Table 1 below.
We assume that $l_1 = X$, $l_2 = Y$, the other cases being obtained by a permutations of variables.
The first column contains the pair $(q_1,q_2)$ (again modulo permutations of variables),
the second column contains the monomials $d$ of degree $5$
that are in the ideal $(l_1 q_1, l_1 q_2, l_2 q_1, l_2 q_2)$,
the third column contains those $d$ for which $\d(X,Y,q_1,q_2,d)$ is a line,
and in the fourth column are listed those $d$ for which $\d(X,Y,q_1,q_2,d)$ is contained in the closure of a line
or surface of fixed points in $\M_0 \setminus \M_{01}$.
We will use the abbreviation $\Sigma^5 = \{ X^iY^jZ^k, \ i ,j, k \ge 0,\ i+j+k =5 \}$.

\begin{table}[!hpt]{Table 1. Fixed points $\d(X,Y,q_1,q_2,d)$ in $\M_{01}$.}
\begin{center}
\begin{tabular}{|c|l|c|c|}
\hline
$(q_1, q_2)$ & equation of support & affine lines & limit sheaves \\
\hline
$(X^2,Y^2)$
&
$\Sigma^5 \setminus \{ Z^5, Z^4 X, Z^4 Y, Z^3 X^2, Z^3 XY, Z^3 Y^2 \}$
&
$X^2 Y^2 Z$
&
\\
\hline
$(X^2, Z^2)$
&
$\Sigma^5 \setminus \{ Y^5, Z^5, XY^4, Y^4Z, XY^3Z \}$
&
&
$X^3YZ$
\\
\hline
$(Z^2, XY)$
&
$\Sigma^5 \setminus \{ X^5, Y^5, Z^5, X^4Z, Y^4Z \}$
&
&
$X^2Y^2Z$
\\
\hline
$(X^2, YZ)$
&
$\Sigma^5 \setminus \{ Y^5, Z^5, XY^4, XZ^4, X^2 Z^3, Y Z^4 \}$
&
$X^2 YZ^2$
&
$X^3 Y^2$
\\
\hline
$(X^2, XY)$
&
\begin{tabular}{l}
$\Sigma^5 \setminus \{ Y^5, Z^5, XZ^4, X^2 Z^3, Y Z^4, Y^2 Z^3,$ \\
$\phantom{\Sigma^5 \setminus \{} Y^3 Z^2, Y^4 Z, XYZ^3 \}$
\end{tabular}
&
\begin{tabular}{l}
$X^2 YZ^2$ \\ $X^3 YZ$ \\ $X^2 Y^2 Z$ \\ $X^2 Y^3$
\end{tabular}
&
\\
\hline
$(XZ, YZ)$
&
\begin{tabular}{l}
$\Sigma^5 \setminus \{ X^5, Y^5, Z^5, XZ^4, YZ^4, XY^4, X^2 Y^3,$ \\
$\phantom{\Sigma^5 \setminus \{} X^3 Y^2, X^4 Y \}$
\end{tabular}
&
\begin{tabular}{l}
$XYZ^3$ \\ $XY^3 Z$ \\ $X^2 Y^2 Z$ \\ $X^3 YZ$
\end{tabular}
&
$X^2 Y^2 Z$
\\
\hline
$(X^2, XZ)$
&
\begin{tabular}{l}
$\Sigma^5 \setminus \{ Y^5, Z^5, XY^4, XZ^4, YZ^4, Y^2 Z^3,$ \\
$\phantom{\Sigma^5 \setminus \{} Y^3 Z^2, Y^4 Z \}$
\end{tabular}
&
\begin{tabular}{l}
$X^3 Z^2$ \\ $X^2 YZ^2$ \\ $X^2 Y^2 Z$
\end{tabular}
&
$X^4 Y$
\\
\hline
$(XY, YZ)$
&
\begin{tabular}{l}
$\Sigma^5 \setminus \{ X^5, Y^5, Z^5, YZ^4, XZ^4, X^2 Z^3,$ \\
$\phantom{\Sigma^5 \setminus \{} X^3 Z^2, X^4 Z \}$
\end{tabular}
&
\begin{tabular}{l}
$XY^2 Z^2$ \\ $X^2 YZ^2$ \\ $X^3 YZ$
\end{tabular}
&
$X^2 Y^3$
\\
\hline
$(XZ, Z^2)$
&
\begin{tabular}{l}
$\Sigma^5 \setminus \{ X^5, Y^5, Z^5, Y^4 Z, XY^4, X^2 Y^3,$ \\
$\phantom{\Sigma^5 \setminus \{} X^3 Y^2, X^4 Y \}$
\end{tabular}
&
\begin{tabular}{l}
$XY^2 Z^2$ \\ $X^2 YZ^2$ \\ $X^3 Z^2$
\end{tabular}
&
$X^2 YZ^2$
\\
\hline
\end{tabular}
\end{center}
\end{table}


\subsection{Fixed points in $\M_1$}
\label{2.3}

Assume that the point in $\M_{\PP^2}(5,3)$ represented by the morphism
\[
\f = \left[
\ba{cc}
\f_{11} & 0 \\
\f_{21} & \f_{22}
\ea
\right] = \left[
\ba{cccc}
l_1 & l_2 & 0 & 0 \\
q_{11} & q_{12} & l_{11} & l_{12} \\
q_{21} & q_{22} & l_{21} & l_{22} \\
q_{31} & q_{32} & l_{31} & l_{32}
\ea
\right]
\]
is fixed by $T$. Then, for each $t \in T$, there is $(g(t), h(t))\in G_1$ such that $t\f = h(t) \f g(t)$.
We write
\[
g(t) = \left[
\ba{cc}
g_1 & 0 \\
u & g_2
\ea
\right], \qquad h(t) = \left[
\ba{cc}
h_{11} & 0 \\
v & h_2
\ea
\right].
\]
From the relations $t \f_{11} = h_{11}(t) \f_{11} g_1(t)$ and $t \f_{22} = h_2(t) \f_{22} g_2(t)$
we see that $\f_{11}$ gives a $T$ fixed point in $\N(3,2,1)$ while $\f_{22}$ gives a $T$-fixed point in $\N(3,2,3)$.
The fixed points in $\N(3,2,1)$ have been described at \ref{2.1.3}.
They are completely determined by the $T$-fixed point $x \in \PP^2$ given by the ideal $(l_1,l_2)$.
The fixed points in $\N(3,2,3)$ have been described in \cite[Chapter VI]{drezet_cohomologie}.
They are completely determined by the ideal generated by the maxinal minors $q_1$, $q_2$, $q_3$ of $\f_{22}$.
We have ten fixed points corresponding to the ideals of $T$-fixed subschemes $\Z$ of $\PP^2$
of length $3$, where $\Z$ is not contained in a line,
and three more points corresponding to the ideals $(X^2, XY, XZ)$, $(XY, Y^2, YZ)$, $(XZ, YZ, Z^2)$.
Let $\M(l_1,l_2,q_1,q_2,q_3)$ be the image in $\M_{\PP^2}(5,3)$ of the set of morphisms
$\f$ as above with fixed $\f_{11}$ and $\f_{22}$. Given a monomial $d$ of degree $5$, we denote by
$\M(l_1,l_2,q_1,q_2,q_3,d)$ the subset given by the additional condition that $\det(\f)$ be a multiple of $d$.

\begin{proposition}
\label{2.3.1}
Assume that $\f_{11}$ and $\f_{22}$ give $T$-fixed points in $\N(3,2,1)$, respectively $\N(3,2,3)$.
Let $l_1$, $l_2$ be the entries of $\f_{11}$ and let $q_1$, $q_2$, $q_3$ be the maximal minors of $\f_{22}$.
Then, for any monomial $d$ of degree $5$ belonging to the ideal
$(l_1 q_1, l_1 q_2, l_1 q_3, l_2 q_1, l_2 q_2, l_2 q_3)$, the set of fixed points for the action of $T$ on
$\M(l_1,l_2,q_1,q_2,q_3,d)$ has precisely one irreducible component, which is either a point or an affine line.
\end{proposition}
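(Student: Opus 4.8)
The plan is to follow the same strategy that proved Proposition \ref{2.2.1}, but now with the larger normal form coming from the resolution with middle term $\O(-1) \oplus 3\O$. First I would fix, by symmetry, one representative choice of the data $(\f_{11}, \f_{22})$: say $l_1 = X$, $l_2 = Y$ (so that the $T$-fixed point of $\N(3,2,1)$ is $[0:0:1]$), and one of the thirteen $T$-fixed points of $\N(3,2,3)$, writing out explicitly the monomials $q_1, q_2, q_3$. As established in the paragraph preceding the proposition, the diagonal blocks $h_{11}, g_1, h_2, g_2$ of the conjugating pair $(g(t), h(t))$ are completely pinned down by the $T$-action on the two factors, up to the single scalar ambiguity recorded in $G_1$; the only free data are the off-diagonal blocks $u$ (a $2 \times 1$ matrix of one-forms) and $v$ (a $3 \times 1$ matrix of two-forms), together with the entries $q_{rs}$ of $\f_{21}$.

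Next I would exploit the freedom to perform elementary row and column operations on $\f$ — exactly as in \ref{2.2.1}, where one normalizes so that certain monomials do not appear in $q_{11}, q_{12}, q_{21}$ — to kill $u$ and $v$ and to put $\f_{21}$ in a normal form in which each entry $q_{rs}$ is a scalar multiple of a fixed monomial determined by the weights. Writing out the matrix identity $t\f = h(t)\f g(t)$ block by block, the $(2,1)$ block reads $t\f_{21} = h_{11}\f_{21}g_1 + \text{(terms in } u, v\text{)}$; choosing $t$ with the weight $t_0^i t_1^j t_2^k$ of the scalar $h_{33}$-type ambiguity generic, one forces the off-diagonal contributions to vanish and concludes that each $q_{rs}$ is a monomial (or zero) whose exponent vector is an affine-linear function of $(i,j,k)$, with $i+j+k$ fixed by a degree count. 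This reduces the fixed locus inside $\M(l_1,l_2,q_1,q_2,q_3)$ to a finite combinatorial list indexed by the admissible triples $(i,j,k)$ — precisely the list that, after imposing the extra divisibility $d \mid \det(\f)$, must be shown to form a single point or a single affine line.

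The remaining step is to impose the condition $d \mid \det(\f)$ and to check that, within each such $\M(l_1,l_2,q_1,q_2,q_3,d)$, exactly one irreducible component survives. Here I would compute $\det(\f)$ in terms of $(i,j,k)$ and the scalars $c_{rs}$: because $\f_{12} = 0$, the determinant factors as (a maximal minor expression in $\f_{22}$) against entries of $\f_{11}$ and $\f_{21}$, so its monomial content is controlled and the constraint $d \mid \det(\f)$ typically pins down the triple $(i,j,k)$ uniquely, leaving at most a one-parameter family of scalars $c_{rs}$ — the affine line — or a single point. As in \ref{2.2.1}, one must also track which values of the scalar parameter are excluded because they would make $\f$ fail to be injective or would land outside $W_1$ (the stratum-defining open conditions: $\f_{11}$ with linearly independent entries, $\f_{22}$ with linearly independent maximal minors), so the "affine line" is a $\PP^1$ minus one point.

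The main obstacle I anticipate is purely bookkeeping rather than conceptual: the case analysis is considerably longer than in \ref{2.2.1} because $\f_{21}$ now has six entries rather than four, $v$ carries two-forms, and there are thirteen $T$-fixed points in $\N(3,2,3)$ to permute through; moreover $\det(\f)$ is a harder polynomial to expand than in the earlier $3\times 3$ situation. The delicate point within that bookkeeping is verifying \emph{uniqueness} of the surviving component for every admissible $d$ — one must make sure that two a priori different triples $(i,j,k)$ cannot both be compatible with the same $d$ in a way that produces two distinct lines or a line plus an extra point. I would organize this by first listing, for the chosen $(\f_{11},\f_{22})$, all admissible $(i,j,k)$ and the corresponding $\det(\f)$-supports, then grouping the monomials $d$ by which triple they force, exactly in the spirit of Table 1; the symmetry under permutation of $X,Y,Z$ then reduces the thirteen choices of $\f_{22}$ to a handful of genuinely distinct cases.
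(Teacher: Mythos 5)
Your plan follows essentially the same route as the paper's proof: reduce by symmetry to one representative $(\f_{11},\f_{22})$, use that these are stable Kronecker modules with trivial stabilisers to pin down the diagonal blocks of $(g(t),h(t))$ up to a single character $h_{11}(t)=t_0^{-i}t_1^{-j}t_2^{-k}$ with $i+j+k=1$, normalise $\f_{21}$ by elementary operations, and conclude from the $(2,1)$-block relation (for generic $t$) that each entry of $\f_{21}$ is a constant times a monomial determined by $(i,j,k)$, after which the components are enumerated case by case according to which constants vanish and matched with the support monomial $d$. The only cosmetic difference is that the paper does not ``kill'' $u$ and $v$ by operations on $\f$ but instead derives their form from the entrywise relations (1)--(6), which is exactly what your normalisation-plus-generic-$t$ step accomplishes.
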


\begin{proof}
We will only examine the case when $\Z$ is a triple point supported on $\{ x \}$, the other cases being analogous.
Consider morphisms of the form
\[
\f = \left[
\ba{cccc}
X & Y & 0 & 0 \\
q_{11} & q_{12} & X & 0 \\
q_{21} & q_{22} & 0 & Y \\
q_{31} & q_{32} & Y & X
\ea
\right].
\]
Performing elementary operations on $\f$ we may assume that
\[
q_{11} \in \CC[Y,Z], \quad q_{12} \in \CC[Z], \quad q_{21} \in \CC[Z],
\quad q_{22} \in \CC[X,Z], \quad q_{32} \in \CC[X,Z].
\]
We have the relations
\[
t \f_{11} = \f_{11} \left[
\ba{cc}
t_0 & 0 \\
0 & t_1
\ea
\right], \qquad t \f_{22} = \left[
\ba{ccc}
t_0^{} t_1^{-1} & 0 & 0 \\
0 & t_0^{-1} t_1^{} & 0 \\
0 & 0 & 1
\ea
\right] \f_{22} \left[
\ba{cc}
t_1 & 0 \\
0 & t_0
\ea
\right].
\]
The morphisms $\f_{11}$ and $\f_{22}$ are stable as Kronecker modules, hence they have trivial
stabilisers in $\GL(2,\CC)$, respectively in $(\GL(2, \CC) \times \GL(3,\CC))/\CC^*$.
Thus, we may assume that
\[
g_1 (t) = h_{11}^{-1} \left[
\ba{cc}
t_0 & 0 \\
0 & t_1
\ea
\right], \qquad g_{2} = \left[
\ba{cc}
t_1 & 0 \\
0 & t_0
\ea
\right], \qquad h_2 = \left[
\ba{ccc}
t_0^{} t_1^{-1} & 0 & 0 \\
0 & t_0^{-1} t_1^{} & 0 \\
0 & 0 & 1
\ea
\right].
\]
The relation $t \f = h(t) \f g(t)$ is equivalent to the relation
$t \f_{21} = v \f_{11} g_1 + h_2 \f_{21} g_1 + h_2 \f_{22} u$, or
\begin{align*}
\left[
\ba{cc}
t q_{11} & t q_{12} \\
t q_{21} & t q_{22} \\
t q_{31} & t q_{32}
\ea
\right] = & \left[
\ba{c}
v_1 \\ v_2 \\ v_3
\ea
\right] \left[
\ba{cc}
X & Y
\ea
\right] h_{11}^{-1} \left[
\ba{cc}
t_0 & 0 \\
0 & t_1
\ea
\right] +
\\
& \left[
\ba{ccc}
t_0^{} t_1^{-1} & 0 & 0 \\
0 & t_0^{-1} t_1^{} & 0 \\
0 & 0 & 1
\ea
\right] \left[
\ba{cc}
q_{11} & q_{12} \\
q_{21} & q_{22} \\
q_{31} & q_{32}
\ea
\right] h_{11}^{-1} \left[
\ba{cc}
t_0 & 0 \\
0 & t_1
\ea
\right] +
\\
& \left[
\ba{ccc}
t_0^{} t_1^{-1} & 0 & 0 \\
0 & t_0^{-1} t_1^{} & 0 \\
0 & 0 & 1
\ea
\right] \left[
\ba{cc}
X & 0 \\
0 & Y \\
Y & X
\ea
\right] \left[
\ba{cc}
u_{11} & u_{12} \\
u_{21} & u_{22}
\ea
\right].
\end{align*}
From the relation
\[
t q_{12} = h_{11}^{-1} t_1^{} v_1^{} Y + h_{11}^{-1} t_0^{} q_{12}^{} + t_0^{} t_1^{-1} u_{12}^{} X
\]
we get the relations
\[
\tag{1}
t q_{12}^{} = h_{11}^{-1} t_0^{} q_{12}^{}, \quad u_{12}^{} = aY, \quad v_1^{} = - h_{11}^{} t_0^{} t_1^{-2} aX
\quad \text{for some $a \in \CC$}.
\]
From the relation
\[
t q_{11}^{} = h_{11}^{-1} t_0^{} v_1^{} X + h_{11}^{-1} t_0^2 t_1^{-1} q_{11}^{} + t_0^{} t_1^{-1} u_{11}^{} X
\]
we get the relations
\[
\tag{2}
t q_{11}^{} = h_{11}^{-1} t_0^2 t_1^{-1} q_{11}^{}, \qquad u_{11}^{} = t_0^{} t_1^{-1} aX.
\]
From the relation
\[
t q_{21}^{} = h_{11}^{-1} t_0^{} v_2^{} X + h_{11}^{-1} t_1^{} q_{21}^{} + t_0^{-1} t_1^{} u_{21}^{} Y
\]
we get the relations
\[
\tag{3}
t q_{21}^{} = h_{11}^{-1} t_1^{} q_{21}^{}, \quad u_{21}^{} = bX, \quad v_2^{} = -h_{11}^{} t_0^{-2} t_1^{} bY, \quad
\text{for some $b \in \CC$}.
\]
From the relation
\[
t q_{22}^{} = h_{11}^{-1} t_1^{} v_2^{} Y + h_{11}^{-1} t_0^{-1} t_1^{2} q_{22}^{} + t_0^{-1} t_1^{} u_{22}^{} Y
\]
we get the relations
\[
\tag{4}
t q_{22}^{} = h_{11}^{-1} t_0^{-1} t_1^{2} q_{22}^{}, \qquad u_{22}^{} = t_0^{-1} t_1^{} bY.
\]
From the relation
\[
t q_{32}^{} = h_{11}^{-1} t_1^{} v_3^{} Y + h_{11}^{-1} t_1^{} q_{32}^{} + u_{12}^{} Y + u_{22}^{} X
\]
we get the relation
\[
\tag{5}
t q_{32}^{} = h_{11}^{-1} t_1^{} q_{32}^{}, \qquad v_3^{} = -h_{11}^{}(t_1^{-1} aY + t_0^{-1} bX).
\]
Finally, we have the relations
\begin{align*}
t q_{31}^{} & = h_{11}^{-1} t_0^{} v_3^{} X + h_{11}^{-1} t_0^{} q_{31}^{} + u_{11}^{} Y + u_{21}^{} X \\
& = h_{11}^{-1} t_0^{} q_{31}^{} - t_0^{} (t_1^{-1} aY + t_0^{-1} bX)X + t_0^{} t_1^{-1} aXY + bX^2, \\
t q_{31}^{} & = h_{11}^{-1} t_0^{} q_{31}^{}, \tag{6}
\end{align*}
Combining relations (1)--(6), yields the equation $t \f_{21} = h_2(t) \f_{21} g_1(t)$.
It becomes clear now that $q_{rs}$ are monomials and $h_{11}^{}(t) = t_0^{-i} t_1^{-j} t_2^{-k}$,
where $i$, $j$, $k$ are integers satisfying $i + j+ k = 1$.
In fact,
\[
\f_{21} = \left[
\ba{ll}
c_{11} X^{2+i} Y^{-1+j} Z^k & c_{12} X^{1+i} Y^j Z^k \\
c_{21} X^i Y^{1+j} Z^k & c_{22} X^{-1+i} Y^{2+j} Z^k \\
c_{31} X^{1+i} Y^j Z^k & c_{32} X^i Y^{1+j} Z^k
\ea
\right].
\]
Assume that $c_{12} \neq 0$. Then $i = -1$, $j=0$, $c_{11}=0$, $c_{21}=0$, $c_{22}=0$, $c_{32}=0$.
We get the $T$-fixed points represented by the matrices
\[
\f_1(c) = \left[
\ba{cccc}
X & Y & 0 & 0 \\
0 & Z^2 & X & 0 \\
0 & 0 & 0 & Y \\
cZ^2 & 0 & Y & X
\ea
\right], \qquad c \in \CC \setminus \{ -1 \}.
\]
Assume that $c_{12}=0$, $c_{21} \neq 0$. Then $i=0$, $j=-1$, $c_{11}=0$, $c_{22}=0$, $c_{31}=0$.
We obtain the fixed points
\[
\f_2(c) = \left[
\ba{cccc}
X & Y & 0 & 0 \\
0 & 0 & X & 0 \\
Z^2 & 0 & 0 & Y \\
0 & cZ^2 & Y & X
\ea
\right], \qquad c \in \CC \setminus \{ -1 \}.
\]
Assume that $c_{12}=0$, $c_{21}=0$, $c_{11} \neq 0$.
Then $i=-2$, $c_{22}=0$, $c_{31}=0$, $c_{32}=0$, and we get the fixed points
\[
\left[
\ba{cccc}
X & Y & 0 & 0 \\
q & 0 & X & 0 \\
0 & 0 & 0 & Y \\
0 & 0 & Y & X
\ea
\right], \qquad q \in \{ Y^2, YZ, Z^2 \}.
\]
Assume that $c_{11} = 0$, $c_{12}=0$, $c_{21}=0$, $c_{22} \neq 0$.
Then $j =-2$, $c_{31}=0$, $c_{32}=0$, and we get the fixed points
\[
\left[
\ba{cccc}
X & Y & 0 & 0 \\
0 & 0 & X & 0 \\
0 & q & 0 & Y \\
0 & 0 & Y & X
\ea
\right], \qquad q \in \{ X^2, XZ, Z^2 \}.
\]
Assume that $c_{11}=0$, $c_{12}=0$, $c_{21}=0$, $c_{22}=0$, $c_{32} \neq 0$.
Then $j = -1$, $c_{31}=0$, and we obtain the fixed points
\[
\left[
\ba{cccc}
X & Y & 0 & 0 \\
0 & 0 & X & 0 \\
0 & 0 & 0 & Y \\
0 & q & Y & X
\ea
\right], \qquad q \in \{ X^2, XZ, Z^2\}.
\]
Note that for $q = Z^2$ we have the point $\f_2(\infty)$.
Assume, finally, that $c_{11}=0$, $c_{12}=0$, $c_{21}=0$, $c_{22}=0$, $c_{32}=0$, $c_{31} \neq 0$.
We have the fixed points
\[
\left[
\ba{cccc}
X & Y & 0 & 0 \\
0 & 0 & X & 0 \\
0 & 0 & 0 & Y \\
q & 0 & Y & X
\ea
\right], \qquad q \in \{ X^2, Y^2, Z^2, XY, XZ, YZ \}.
\]
Note that for $q=Z^2$ we have the point $\f_1 (\infty)$.
In conclusion, the set of fixed points for the action of $T$ on $\M(X,Y,X^2,XY,Y^2)$
consists of thirteen isolated points and two affine lines, namely
\[
\{ [\f_1(c)], \ c \in \PP^1 \setminus \{-1 \} \}, \qquad \{ [\f_2(c)], \ c \in \PP^1 \setminus \{-1 \} \}.
\qedhere
\]
\end{proof}


\subsection{Fixed points in $\M_2$}
\label{2.4}

Assume that $\f \in W_2$ gives a $T$-fixed point $\F$ in $\M_{\PP^2}(5,3)$.
Then it is easy to see that $\f_{11}$ gives a fixed point in the Kronecker moduli space $\N(3,3,2)$.
Given quadratic forms $q_1$, $q_2$, $q_3$, we denote by
$\M(q_1, q_2, q_3)$ be the image in $\M_{\PP^2}(5,3)$
of the set of morphisms $\f \in W_2$ such that $(q_1, q_2, q_3)$ is the ideal generated
by the maximal minors of $\f_{11}$.
Given a monomial $d$ of degree $5$, we denote by
$\M(q_1,q_2,q_3,d)$ the subset given by the additional condition that $\det(\f)$ be a multiple of $d$.

As mentioned at the beginning of this section, the points in $\M_{20}$ are of the form $\O_Q(\Z)(1)$.
Clearly, $\O_Q(\Z)(1)$ is fixed by $T$ if and only if $Q$ and $\Z$ are $T$-invariant,
so we will concentrate on finding the fixed points $\F$ in $\M_{21}$.
As mentioned at the beginning of this section, $\F$ is an extension of $\O_L$ by $\O_C(1)$.
Denote
\[
\M_{L,C} = \PP(\Ext^1(\O_L, \O_C(1)) \cap \M_{21}.
\]
The common factor of the maximal minors of $\f_{11}$ is a monomial
because $\f_{11}$ gives a fixed point in the Kronecker moduli space.
Thus $L$ is $T$-invariant, and the same is true of $C$.
We will assume that $L$ is given by the equation $X=0$, the other cases being analogous.
Write $d = \det(\f) = X^i Y^j Z^k$ and fix $f_1, f_2, f_3 \in \Sym^3 V^*$ satisfying the equation
\[
-f_1 Y - f_2 Z + f_3 X = d/l = X^{i-1} Y^j Z^k.
\]
Consider the set $U$ of morphisms of the form
\[
\left[
\ba{ccc}
X & 0 & Y \\
0 & X & Z \\
f_1- qZ & f_2 + qY & f_3
\ea
\right], \qquad q = aY^2 + bYZ + cZ^2, \quad a,b,c \in \CC.
\]
By the argument at \cite[Proposition 5.1]{choi_maican}, we can show that the map
$U \to \M_{L,C}$ sending $\f$ to $[\Coker(\f)]$ is an isomorphism and the induced action
of $T$ on $U$ via this isomorphism is given by $(t,q) \mapsto t_0^{1-i} t_1^{1-j} t_2^{1-k} (tq)$.
Choosing coordinates $(a,b,c)$ we identify $U$ with $\AA^3$.
The induced action of $T$ on $\AA^3$ is given by
\[
t(a,b,c) = t_0^{1-i}(t_1^{3-j} t_2^{1-k}a, \ t_1^{2-j} t_2^{2-k} b, \ t_1^{1-j} t_2^{3-k} c).
\]
If $i \neq 1$, or if $(i,j,k) \in \{ (1,4,0), \ (1,0,4) \}$, we get a single isolated fixed point, namely
$(0,0,0)$. If $(i,j,k) \in \{ (1,3,1),\ (1,2,2), \ (1,1,3) \}$, then we get an affine line of fixed points.
Summarising we obtain the following proposition.

\begin{proposition}
\label{2.4.1}
Assume that $\f_{11}$ gives a $T$-fixed point in $\N(3,3,2)$.
Let $q_1$, $q_2$, $q_3$ be the maximal minors of $\f_{11}$.
Then, for any monomial $d$ of degree $5$ belonging to the ideal $(q_1, q_2, q_3)$,
the set of fixed points for the action of $T$ on $\M(q_1,q_2,q_3,d)$ has precisely one irreducible
component, which is either a point or an affine line.
We have a line in the following cases:

\begin{center}
\begin{tabular}{lr}
$(q_1, q_2, q_3)$ & $d$ \\
\hline
$(X^2, XY, XZ)$ & $XY^3Z$, $XY^2 Z^2$, $XYZ^3$ \\
$(XY, Y^2, YZ)$ & $X^3 YZ$, $X^2 YZ^2$, $XYZ^3$ \\
$(XZ, YZ, Z^2)$ & $X^3 YZ$, $X^2 Y^2 Z$, $XY^3 Z$
\end{tabular}
\end{center}
\end{proposition}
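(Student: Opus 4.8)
The plan is to split into two cases according to whether the quadratic forms $q_1$, $q_2$, $q_3$ share a common linear factor. Since $\f_{11}$ gives a $T$-fixed point of $\N(3,3,2)$, the ideal $(q_1,q_2,q_3)$ is either the ideal of a $T$-invariant zero-dimensional scheme $\Z$ of length $3$ not lying on a line, or one of $(X^2,XY,XZ)$, $(XY,Y^2,YZ)$, $(XZ,YZ,Z^2)$; accordingly every contributing $\f \in W_2$ has cokernel in $\M_{20}$ in the first case and in $\M_{21}$ in the second, so $\M(q_1,q_2,q_3,d)$ lies entirely in one stratum, and I would treat the two possibilities separately.

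In the $\M_{20}$ case, where $(q_1,q_2,q_3)$ is the ideal of $\Z$, I would note that the hypothesis $d \in (q_1,q_2,q_3)$ says exactly that $\Z$ lies on the quintic $Q = \{d=0\}$, and that both $\Z$ and $Q$ are $T$-invariant. Since every contributing $\f \in W_2$ has cokernel $\O_Q(\Z)(1)$, the set $\M(q_1,q_2,q_3,d)$ consists of the single point $[\O_Q(\Z)(1)]$, which is automatically $T$-fixed; so its fixed locus is one isolated point, as claimed.

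In the $\M_{21}$ case I would first reduce to $(q_1,q_2,q_3)=(X^2,XY,XZ)$ by permuting $X,Y,Z$, so that $L=\{X=0\}$ and, writing $d=X^iY^jZ^k$, the hypothesis $d\in(q_1,q_2,q_3)$ becomes $i\ge1$ and $C=\{X^{i-1}Y^jZ^k=0\}$. Then I would invoke the isomorphism $U\isom\AA^3$ and the linear $T$-action on $U$ established just before the proposition, under which the coordinates $(a,b,c)$ (the coefficients of $Y^2,YZ,Z^2$ in $q$) carry the characters $t_0^{1-i}t_1^{3-j}t_2^{1-k}$, $t_0^{1-i}t_1^{2-j}t_2^{2-k}$, $t_0^{1-i}t_1^{1-j}t_2^{3-k}$. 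A point of $\AA^3$ is $T$-fixed iff each of its nonzero coordinates has trivial weight, so the fixed locus is the coordinate subspace spanned by the axes whose character is trivial on $T$, i.e. whose three exponents agree. Combined with $i+j+k=5$, the equal-exponents condition forces $(i,j,k)$ to be $(1,3,1)$ for the $a$-axis, $(1,2,2)$ for the $b$-axis, and $(1,1,3)$ for the $c$-axis, and these three triples are pairwise incompatible. Hence the fixed locus is $\{(0,0,0)\}$ unless $(i,j,k)$ is one of them, in which case it is the corresponding coordinate line; in every case it is irreducible, a single point or an affine line. Unwinding the permutation of variables then produces the three rows of the table.

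I expect the only real content to be bookkeeping: the analytic heart — the isomorphism $U\to\M_{L,C}$ and the computation of the induced torus action — is already supplied in the paragraph preceding the proposition, and the $\M_{20}$ case is immediate from the description of its sheaves as $\O_Q(\Z)(1)$. The one genuinely new step, solving the equal-exponents conditions under the constraint $i+j+k=5$ and checking their pairwise incompatibility, is a short finite computation; the main risk is simply an arithmetic slip in tracking the exponents.
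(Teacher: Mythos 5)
Your proposal is correct and follows essentially the same route as the paper: the paper's argument is exactly the discussion preceding the proposition, namely dismissing the $\M_{20}$ case because the sheaf $\O_Q(\Z)(1)$ is uniquely determined by the ($T$-invariant) data $\Z$ and $Q=\{d=0\}$, and then, for the common-factor case, using the isomorphism $U \isom \AA^3$ with the diagonal action $t(a,b,c) = t_0^{1-i}(t_1^{3-j}t_2^{1-k}a,\ t_1^{2-j}t_2^{2-k}b,\ t_1^{1-j}t_2^{3-k}c)$ and reading off that the fixed locus is the origin unless $(i,j,k)$ is $(1,3,1)$, $(1,2,2)$ or $(1,1,3)$, in which case it is a coordinate line. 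Your explicit solution of the trivial-weight conditions under $i+j+k=5$ and the permutation of variables reproduce exactly the paper's case list and table.
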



\subsection{Fixed points in $\M_3$}
\label{2.5}

Clearly, $\O_Q(-\Y)(2)$ gives a $T$-fixed point in $\M_{\PP^2}(5,3)$ if and only if $Q$ and $\Y$ are $T$-invariant.
Equivalently, the fixed points in $\M_3$ are given by matrices $\f \in W_3$ that have monomial entries.


\section{The torus representation of the tangent spaces at the fixed points}
\label{section_3}

Let $\F = \Coker(\f)$ give a $T$-fixed point in $\M_k$, where $\f \in W_k$.
Since $W_k \to \M_k$ is a geometric quotient map, the tangent space
$\T_{[\F]} \M_k$ is the quotient of $\T_{\f} W_k$ by the tangent space
$\T_{\f} (G_k \f)$ to the orbit of $\f$.
The latter is a $T$-invariant subspace because $[\F]$ is $T$-fixed.
Thus, the list of weights for the action of $T$ on $\T_{[\F]} \M_k$
is obtained by subtracting the list of weights for $\T_{\f} (G_k \f)$ from the list of
weights for $\T_{\f} W_k$.
These two lists can be determined as in \cite[Section 6]{choi_maican}
provided that there is a morphism of groups
\[
(\CC^*)^3  \to G_k, \quad t \mapsto ((u(t), v(t)),
\quad \text{such that} \quad t \f = v(t) \f u(t) \quad \text{for all} \quad t \in (\CC^*)^3
\]
and such that $u(t)$ and $v(t)$ are diagonal matrices.
The diagonal entries $u_i$, $v_j$ of $u$, $v$ are characters of $(\CC^*)^3$.
The existence of $u$ and $v$ is obvious when $\f$ is $\a$, $\b$ or $\g$ from \ref{2.1}.
In the sequel, for all other fixed points $[\F]$ in $\M_{\PP^2}(5,3)$ we will give $\f$
for which $u$ and $v$ exist.

For convenience, we will use additive notation when we deal with characters of $(\CC^*)^3$ or of $T$.
Denote by $x$, $y$, $z$ the standard basis for the lattice of characters of $(\CC^*)^3$.
The characters of $T$ are of the form $ix+jy+kz$, $i, j, k \in \ZZ$, $i+j+k=0$.
We will use the following abbreviations:
\begin{align*}
\Sigma^5 & = \{ X^iY^jZ^k, \ i ,j, k \ge 0,\ i+j+k =5 \}, \\
\chi_0 & = \text{the trivial character of $(\CC^*)^3$ or of $T$}, \\
\s^0 & = \{ x - y, \ y - x,\ x - z,\ z - x,\ y - z,\ z - y \}, \\
\s^l & = \{ ix + jy + kz, \ i,j,k \in \ZZ, \ i,j,k \ge 0,\ i + j + k = l \}, \quad l \ge 1,\\
\s^l_x & = \s^l \setminus (x + \s^{l - 1}), \\
\s^l_y & = \s^l \setminus (y + \s^{l - 1}), \\
\s^l_z & = \s^l \setminus (z + \s^{l - 1}).
\end{align*}
We also adopt the following convention: whenever a monomial $f = X^i Y^j Z^k$ appears in a list of characters,
it stands for the expression $ix + jy + kz$.


\subsection{Fixed points in $\M_0$}
\label{3.1}

According to \cite[6.1.1]{choi_maican}, the action of $T$ on $\T_{\f} W_0$ is given by the formula
\[
\tag{3.1.1}
(t,w) \mapsto v(t)^{-1} (tw) u(t)^{-1},
\]
where $tw$ refers to the canonical action of $(\CC^*)^3$ on the symmetric powers of $V^*$.
Thus, the list of weights for the action of $T$ on $\T_{\f} W_0$ is represented by the array
\[
\ba{rrrrr}
-v_1 - u_1 + \s^2 & \quad & -v_1 - u_2 + \s^2 & \quad & - v_1 - u_3 + \s^1 \\
-v_2 - u_1 + \s^2 & \quad & -v_2 - u_2 + \s^2 & \quad & - v_2 - u_3 + \s^1 \\
-v_3 - u_1 + \s^2 & \quad & -v_3 - u_2 + \s^2 & \quad & - v_3 - u_3 + \s^1
\ea.
\]
According to \cite[6.1.3]{choi_maican}, the action of $T$ on $\T_{\f}(G_0 \f)$,
which is identified with the tangent space of $G_0$ at the neutral element,
is given by the formula
\[
\tag{3.1.2}
t(A,B) = (u(t) (tA) u(t)^{-1}, \ v(t)^{-1} (tB) v(t)).
\]
It follows that the list of weights for the action of $T$ on $\T_{\f}(G_0 \f)$ is represented by the array
\[
\ba{lllll}
\chi_0 & u_1 - u_2 & \phantom{-} \chi_0 & - v_1 + v_2 & - v_1 + v_3 \\
u_2 - u_1 & \chi_0 & - v_2 + v_1 & \phantom{-} \chi_0 & - v_2 + v_3 \\
u_3 - u_1 + \s^1 & u_3 - u_2 + \s^1 & - v_3 + v_1 & - v_3 + v_2 & \phantom{-} \chi_0
\ea.
\]
Assume now that
\[
\f = \a(q_1, q_2) = \left[
\ba{ccc}
q_1 & 0 & X \\
0 & q_2 & Y \\
0 & 0 & Z
\ea
\right].
\]
We have
\[
\ba{lll}
u_1 = q_1 - x & \qquad & v_1 = x \\
u_2 = q_2 - y & \qquad & v_2 = y \\
u_3 = 0 & \qquad & v_3 = z
\ea.
\]
Thus, the list of weights for the action of $T$ on $\T_{[\a]} \M$ is represented by the following table:
\[
\ba{rrr}
- q_1 + (\s^2 \setminus \{ q_1 \}) & \quad & - q_2 + (\s^2 \setminus \{ q_2 \}) \\
x - y - q_1 + (\s^2 \setminus \{ q_2 \}) & \quad & y - x - q_2 + (\s^2 \setminus \{ q_1 \}) \\
x - z - q_1 + \s^2_z & \quad & y - z - q_2 + \s^2_z
\ea.
\]
Here $(q_1, q_2) \in (\s^2_x \times \s^2_y) \cup \{ (2x, x+y), \ (x+y, 2y), \ (x+z, y+z) \}$.
Assume next that
\[
\f = \b(q,l) = \left[
\ba{ccc}
q & 0 & X \\
0 & alY & Y \\
0 & blZ & Z
\ea
\right].
\]
We have
\[
\ba{lll}
u_1 = q - x & \qquad & v_1 = x \\
u_2 = l & \qquad & v_2 = y \\
u_3 = 0 & \qquad & v_3 = z
\ea.
\]
Thus, the list of weights for the action of $T$ on $\T_{[\b]} \M$ is represented by the following array:
\[
\ba{rrr}
- q + (\s^2 \setminus \{ q \}) & \quad & -z - l + (\s^2 \setminus \{ q \}) \\
- y + z - q + (\s^2 \setminus \{ y + l \}) & \quad & -x - l + (\s^2 \setminus \{ x + l \}) \\
- x + z - q + \s^2_x & \quad & -y - l + \s^2_y
\ea.
\]
Here $q \in \s^2_x$, $l \in \s^1$. We next examine the case of the fixed surfaces.
Assume that
\[
\f = \g = \left[
\ba{ccc}
X^2 & XY & X \\
aXY & bY^2 & Y \\
0 & 0 & Z
\ea
\right].
\]
Clearly, we have
\[
\ba{lll}
u_1 = x & \qquad & v_1 = x \\
u_2 = y & \qquad & v_2 = y \\
u_3 = 0 & \qquad & v_3 = z
\ea.
\]
The list of weights for the action of $T$ on $\T_{[\g]} \M$ is represented by the following array:
\[
\ba{rrrrr}
- 2x + (\s^2 \setminus \{ 2x \}) & \quad & - x - y + (\s^2 \setminus \{ 2x \}) & \quad & - x - z + \s^2_z \\
- 2y + (\s^2 \setminus \{ 2y \}) & \quad & - x - y + (\s^2 \setminus \{ 2y \}) & \quad & - y - z + \s^2_z
\ea.
\]
Given $q_1$, $q_2$ and $d$ as in Table 1, Section \ref{2.2}, we consider morphisms of the form
\[
\f = \d(X, Y, q_1, q_2, d) = \left[
\ba{ccc}
c_{11} d/q_2 Y & c_{12} d/q_1 Y & X \\
c_{21} d/q_2 X & c_{22} d/q_1 X & Y \\
q_1 & q_2 & 0
\ea
\right].
\]
We have
\[
\ba{lll}
u_1 = d - x - y - q_2 & \qquad & v_1 = x \\
u_2 = d - x - y - q_1 & \qquad & v_2 = y \\
u_3 = 0 & \qquad & v_3 = - d + q_1 + q_2 + x + y
\ea.
\]
The list of weights for the action of $T$ on $\T_{[\d]} \M$ is obtained by removing an extra copy of $\chi_0$
from the following table:
\[
\ba{rrr}
- d + q_2 + y + \s^2_x & \quad & - d + q_2 + x + (\s^2 \setminus \{ q_1 \}) \\
- d + q_1 + x + \s^2_y & \quad & - d + q_1 + y + (\s^2 \setminus \{ q_2 \}) \\
d - q_1 - q_2 - x - y + z & \quad & - q_1 + (\s^2 \setminus \{ q_1, q_2 \}) \\
- x + z & \quad & - q_2 + (\s^2 \setminus \{ q_1, q_2 \}) \\
- y + z
\ea.
\]
It is known that $\dim(\T_{[\F]} \M)_{\chi_0}$
equals the dimension of the irreducible component of $\M^{T}$ that contains $[\F]$.
Thus, counting how many times $\chi_0$ appears in the above list,
gives us another approach for determining column four of Table 1 in Section 2.2.
For instance, $\dim(\T_{[\f]})_{\chi_0} = 2$ for $\f = \d(X, Y, XZ, YZ, X^2 Y^2 Z)$ or $\d(X, Y, XZ, Z^2, X^2 Y Z^2)$
even though these points belong to irreducible components of $\M_{01}^{T}$ of dimension $1$.
This shows that these points belong to the closure of irreducible components of
\[
(\M \setminus \overline{\M}_{01})^{T} = (\M_0 \setminus \M_{01})^{T}
\]
of dimension $2$.
For the other points in column four of Table 1 $\dim(\T_{[\f]})_{\chi_0} = 1$ even though they belong to
isolated points of $\M_{01}^{T}$; thus they belong to the closure of lines in $(\M_0 \setminus \M_{01})^{T}$.

All fixed points in $\M_0$ can be obtained from the points $\a$, $\b$, $\g$, $\d$ above by a permutation of variables.
We have thus found the $T$-representation for all tangent spaces at fixed points in $\M_0$.


\subsection{Fixed points in $\M_1$}
\label{3.2}

The list of weights for the action of $T$ on $\T_{\f} W_1$ given by formula (3.1.1) is represented by the array
\[
\ba{rrrrrrr}
- v_1 - u_1 + \s^1 & \quad & - v_1 - u_2 + \s^1 & & & & \\
- v_2 - u_1 + \s^2 & \quad & - v_2 - u_2 + \s^2 & \quad & - v_2 - u_3 + \s^1 & \quad & - v_2 - u_4 + \s^1 \\
- v_3 - u_1 + \s^2 & \quad & - v_3 - u_2 + \s^2 & \quad & - v_3 - u_3 + \s^1 & \quad & - v_3 - u_4 + \s^1 \\
- v_4 - u_1 + \s^2 & \quad & - v_4 - u_2 + \s^2 & \quad & - v_4 - u_3 + \s^1 & \quad & - v_4 - u_4 + \s^1
\ea.
\]
The list of weights for the action of $T$ on $\T_{e} G_1$ given by formula (3.1.2) is represented by the table
\[
\ba{rllllll}
- v_2 + v_1 + \s^1 & \quad & \phantom{-} \chi_0 & \quad & - v_2 + v_3 & \quad & - v_2 + v_4 \\
- v_3 + v_1 + \s^1 & \quad & - v_3 + v_2 & \quad & \phantom{-} \chi_0 & \quad & - v_3 + v_4 \\
- v_4 + v_1 + \s^1 & \quad & - v_4 + v_2 & \quad & - v_4 + v_3 & \quad & \phantom{-} \chi_0
\ea
\]
\[
\ba{lllllll}
\chi_0 & \quad &  u_1 - u_2 & & & & \\
u_2 - u_1 & \quad & \chi_0 & & & & \\
u_3 - u_1 + \s^1 & \quad & u_3 - u_2 + \s^1 & \quad & \chi_0 & \quad & u_3 - u_4 \\
u_4 - u_1 + \s^1 & \quad & u_4 - u_2 + \s^1 & \quad & u_4 - u_3 & \quad & \chi_0
\ea.
\]
There is a small complication here, namely every $\f \in W_1$ has a stabiliser of dimension $2$
consisting of matrices of the form
\[
\left(
\left[
\ba{cc}
I_2 & 0 \\
C \f_{11} & I_2
\ea
\right], \ \left[
\ba{cc}
1 & 0 \\
\f_{22} C & I_3
\ea
\right] \right), \quad \text{where} \quad C = \left[
\ba{c}
c_1 \\
c_2
\ea
\right], \quad c_1, c_2 \in \CC.
\]
We have, therefore, an isomorphism $\CC^2 \isom \T_e \Stab(\f)$ given by
\[
(c_1, c_2) \longmapsto \left( \left[
\ba{cc}
0 & 0 \\
C \f_{11} & 0
\ea
\right], \ \left[
\ba{cc}
0 & 0 \\
\f_{22} C & 0
\ea
\right] \right).
\]
We denote by $s_1$, $s_2$, the images of $(1,0)$, respectively, $(0,1)$.
In order to get the list of weights for $\T_{\f} (G_1 \f)$, we need to subtract the list of weights for $\T_e \Stab(\f)$,
which we will determine case by case,
from the list of weights for $\T_{e} G_1$ above.

\begin{proposition}
\label{3.2.1}
Let $N_{[\F]}$ be the normal space to $\M_1$ at $[\F]$.
The torus $T$ acts on $N_{[\F]}$ with weights $- v_1 - u_3$, $- v_1 - u_4$.
\end{proposition}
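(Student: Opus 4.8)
The plan is to compute $N_{[\F]}$ by identifying the whole tangent space $\T_{[\F]}\M=\Ext^1(\F,\F)$ with the cokernel of the infinitesimal $G_1$-action on the \emph{ambient} matrix space $\WW_1$ --- i.e. without imposing $\f_{12}=0$ --- and then observing that $N_{[\F]}$ is precisely the part contributed by the block $\f_{12}$. For the set-up I would write $\mathcal{A}=\O(-1)\oplus3\O$ and $\mathcal{B}=2\O(-2)\oplus2\O(-1)$, so that $0\to\mathcal{B}\xrightarrow{\f}\mathcal{A}\to\F\to0$ and $\WW_1=\Hom(\mathcal{B},\mathcal{A})$, and note that $W_1$ is cut out inside $\WW_1$ by the vanishing of the component in $\Hom(2\O(-1),\O(-1))$ --- the two entries in the row indexed by $v_1$ and the columns indexed by $u_3$, $u_4$ --- together with open conditions on $\f_{11}$ and $\f_{22}$ that do not constrain the tangent space. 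Hence $\T_\f\WW_1/\T_\f W_1\isom\Hom(2\O(-1),\O(-1))$ is two-dimensional, as the codimension of $\M_1$ in $\M$ requires, and since its two generators are deformations of degree $0$, formula \textup{(3.1.1)} assigns them the weights $-v_1-u_3$ and $-v_1-u_4$. So it remains only to identify $N_{[\F]}$ with this space.

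For that identification the first thing I would establish is the cohomological input: for $[\F]\in\M_1$ one has $\H^1(\F(k))=0$ for all $k\ge0$. This is immediate from the resolution twisted by $\O(k)$, using that $\H^1$ and the relevant $\H^2$ of line bundles on $\PP^2$ vanish, so that $\h^0(\F(k))=\chi(\F(k))$. In particular $\Ext^1(\mathcal{A},\F)=\H^1(\F(1))\oplus3\H^1(\F)=0$, while $\Ext^{\ge1}(\mathcal{A},\mathcal{B})$, $\Ext^{\ge1}(\mathcal{A},\mathcal{A})$, $\Ext^{\ge1}(\mathcal{B},\mathcal{B})$ vanish automatically on $\PP^2$. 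Applying $\Hom(-,\F)$ to the resolution, the long exact sequence then collapses to a $T$-equivariant isomorphism between $\Ext^1(\F,\F)$ and the cokernel of $\f^{*}\colon\Hom(\mathcal{A},\F)\to\Hom(\mathcal{B},\F)$; lifting along the maps $\Hom(\mathcal{A},\mathcal{A})\to\Hom(\mathcal{A},\F)$ and $\Hom(\mathcal{B},\mathcal{A})\to\Hom(\mathcal{B},\F)$, which are surjective by the same vanishings, rewrites this cokernel as
\[
\Hom(\mathcal{B},\mathcal{A})\,/\,\big(\f\circ\Hom(\mathcal{B},\mathcal{B})+\Hom(\mathcal{A},\mathcal{A})\circ\f\big)\,=\,\T_\f\WW_1\,/\,\T_\f(G_1\f).
\]
Since $W_1\to\M_1$ is a geometric quotient and $G_1\f\subset W_1$, the analogous statement for the stratum is $\T_{[\F]}\M_1=\T_\f W_1/\T_\f(G_1\f)$; and, the isomorphism above being induced by the functorial comparison between deformations of $\F$ and deformations of its matrix presentation, as in \cite[Section~6]{choi_maican}, the inclusion $\M_1\hookrightarrow\M$ corresponds under it to the natural inclusion $\T_\f W_1/\T_\f(G_1\f)\hookrightarrow\T_\f\WW_1/\T_\f(G_1\f)$. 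Passing to quotients gives $N_{[\F]}\isom\T_\f\WW_1/\T_\f W_1$, and the first paragraph concludes.

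The step I expect to be the main obstacle is this equivariant identification $\Ext^1(\F,\F)\isom\T_\f\WW_1/\T_\f(G_1\f)$ and its compatibility with $\M_1\hookrightarrow\M$: one must handle with care both the cohomology vanishings that degenerate the long exact sequence and the functoriality --- equivalently, one must know that a neighbourhood of $[\F]$ in $\M$ is modelled by $\WW_1$ near $\f$ modulo $G_1$, the block $\f_{12}$ providing precisely the two directions along which $\F$ is deformed out of $\M_1$ (into $\M_0$). Everything after that is the bookkeeping carried out in the first paragraph.
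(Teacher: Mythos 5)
Your proposal reaches the correct conclusion and its first paragraph coincides with the paper's final step: the normal directions to $W_1$ inside $\WW_1$ are the two constant entries of $\f_{12}\in\Hom(2\O(-1),\O(-1))$, which formula (3.1.1) places in weights $-v_1-u_3$ and $-v_1-u_4$. Where you diverge is in how the ambient tangent space is matched with the matrix model. The paper disposes of this in one stroke by citing the analogue of \cite[Theorem 4.3.3]{dedicata}: the open set $\WW\subset\WW_1$ of injective morphisms with semi-stable cokernel has $\M_0\cup\M_1$ as its good quotient by $G_1$, so a neighbourhood of $[\F]$ in $\M$ is literally modelled by $\WW_1$ near $\f$ modulo $G_1$, and the compatibility with $\T_{[\F]}\M_1=\T_\f W_1/\T_\f(G_1\f)$ is automatic; the two extra weights are then just read off. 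You instead derive $\T_{[\F]}\M=\Ext^1(\F,\F)\isom\T_\f\WW_1/\T_\f(G_1\f)$ cohomologically, using $\H^1(\F(k))=0$ and the long exact sequence from the resolution --- which is sound (the vanishings you list are correct, and the dimension count $62-36=26$ confirms the identification), and is in fact the same style of argument the paper itself uses for Propositions \ref{3.3.1} and \ref{3.4.1}. What your route buys is self-containedness on the $\Ext$ side and no appeal to the quotient theorem for the larger space $\WW$; what it costs is that the crucial compatibility --- that under your isomorphism the subspace $\T_\f W_1/\T_\f(G_1\f)$ maps onto $\T_{[\F]}\M_1\subset\T_{[\F]}\M$ --- is asserted via ``functorial comparison'' rather than proved. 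That step is genuinely the crux (as you acknowledge), and it amounts to knowing that the Kodaira--Spencer map of the family of cokernels over $\WW$ is the differential of a classifying map to $\M$ restricting correctly to $W_1\to\M_1$; this is exactly the content the paper's good-quotient statement supplies. So your argument is acceptable, but to be complete you should either spell out that Kodaira--Spencer functoriality (the kernel of $\T_\f\WW_1\to\Ext^1(\F,\F)$ is $\T_\f(G_1\f)\subset\T_\f W_1$, and $\T_\f W_1$ surjects onto $\T_{[\F]}\M_1$, so the preimage of $\T_{[\F]}\M_1$ is $\T_\f W_1$ by dimension count) or simply invoke the quotient description of $\M_0\cup\M_1$ as the paper does.
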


\begin{proof}
By an argument analogous to the argument at \cite[Theorem 4.3.3]{dedicata},
we can show that $\M_0 \cup \M_{01}$ is the good quotient of an open subset $\WW \subset \WW_1$ modulo $G_1$.
Here $\WW$ is simply the set of injective morphisms that have semi-stable cokernel.
The list of weights for the action of $T$ on $\T_{\f} \WW_1$ given by formula (3.1.1) is the same as the list for $\T_{\f} W_1$,
except that it contains $- v_1 - u_3$ and $- v_1 - u_4$ in the upper-right corner,
accounting for the two normal directions to $W_1$ at $\f$.
\end{proof}

\noindent
Recall from Proposition \ref{2.3.1} that an irreducible component of $\M_1^{T}$ is uniquely determined by
$l_1$, $l_2$, $q_1$, $q_2$, $q_3$ and $d$.
The ideal $(q_1, q_2, q_3)$ defines a $T$-invariant zero-dimensional scheme $\Z$ of length $3$ that is not contained
in a line or is of the form $(lX, lY, lZ)$, $l \in \{ X, Y, Z \}$.
Assume firstly that $\f$ has the form
\[
{
\mathversion{bold}
\boldsymbol{\e(d)} = \left[
\ba{cccc}
X & Y & 0 & 0 \\
c_{11} d/Y^2 Z & c_{12} d/XYZ & X & X \\
c_{21} d/XYZ & c_{22} d/X^2 Z & Y & 0 \\
c_{31} d/XY^2 & c_{32} d/X^2 Y & 0 & Z
\ea
\right]
}
\]
corresponding to the case when $\Z$ consists of three distinct points,
namely $(1, 0, 0)$, $(0, 1, 0)$, $(0, 0, 1)$.
We have
\[
\ba{lll}
u_1 = d - y & \qquad & v_1 = x + y - d \\
u_2 = d - x & \qquad & v_2 = - y - z \\
u_3 = x + y + z & \qquad & v_3 = - x - z \\
u_4 = x + y + z & \qquad & v_4 = - x - y
\ea.
\]
The torus $T$ acts on both $\CC s_1$ and $\CC s_2$ with weight $- d + 2x + 2y+ z$.
The list of weights for the action of $T$ on $\T_{[\e (d)]} \M$ is obtained by removing an extra copy of $\chi_0$
from the following table:
\[
\ba{lll}
\phantom{-} d - 2x - 2y - z & \quad & - d + 2y + z + \s^2_x \\
\phantom{-} d - 2x - 2y - z & \quad & - d + x + y + z + \s^2_x \\
- d + 2x + 2y + z & \quad & - d + x + y + z + \s^2_y \\
- d + 4x + y & \quad & - d + x + 2y + \s^2_z \\
\phantom{-} z - x & \quad & - d + 2x + z + \s^2_y \\
\phantom{-} z - y & \quad & \phantom{-} \s^0
\ea.
\]
Consider next morphisms of the form
\[
{
\mathversion{bold}
\boldsymbol{\z (l_1, l_2, d)} = \left[
\ba{cccc}
l_1 & l_2 & 0 & 0 \\
c_{11} d/l_2 X^2 & c_{12} d/l_1 X^2 & Y & 0 \\
c_{21} d/l_2 YZ & c_{22} d/l_1 YZ & 0 & X \\
c_{31} d/l_2 XY & c_{32} d/l_1 XY & X & Z
\ea
\right]
}
\]
corresponding to the case when $\Z$ is the union of a double point and a simple point.
We have
\[
\ba{lll}
u_1 = d - l_2 & \qquad & v_1 = - d + l_1 + l_2 \\
u_2 = d - l_1 & \qquad & v_2 = - 2x \\
u_3 = 2x + y & \qquad & v_3 = - y - z \\
u_4 = x + y + z & \qquad & v_4 = - x - y
\ea.
\]
The torus $T$ acts on $\CC s_1$ with weight $2x + y - d + l_1 + l_2$
and on $\CC s_2$ with weight $x + y + z - d + l_1 + l_2$.
The list of weights for the action of $T$ on $\T_{[\z]} \M$ is obtained by subtracting the list
$\{ \chi_0, \ - d + l_1 + l_2 + x + 2y \}$ from the list
\[
\ba{lllll}
d - l_1 - l_2 - 2x - y & \quad & - d + l_2 + 2x + \s^2_{l_1} & \quad & x - y \\
d - l_1 - l_2 - x - y - z & \quad & - d + l_1 + y + z + \s^2_{l_2} & \quad & y - x \\
l_3 - l_1 & \quad & - d + l_2 + y + z + \s^2_x & \quad & z - x \\
l_3 - l_2 & \quad & - d + l_2 + x + y + \s^2_x & \quad & 2z - 2x \\
& \quad & - d + l_1 + 2x + \s^2_y & \quad & y  - z \\
& \quad & - d + l_1 + x + y + \s^2_z & \quad & z - y
\ea.
\]
We next examine the case when $\Z$ is a triple point.
We may assume that $\Z$ is supported on $(0, 0, 1)$, the other cases being obtained by a permutation of variables.
Consider thus morphisms of the form
\[
{
\mathversion{bold}
\boldsymbol{\eta (l_1, l_2, d)} = \left[
\ba{cccc}
l_1 & l_2 & 0 & 0 \\
c_{11} d/l_2 Y^2 & c_{12} d/l_1 Y^2 & X & 0 \\
c_{21} d/l_2 X^2 & c_{22} d/l_1 X^2 & 0 & Y \\
c_{31} d/l_2 XY & c_{32} d/l_1 XY & Y & X
\ea
\right].
}
\]
We have
\[
\ba{lll}
u_1 = d - l_2 & \qquad & v_1 = - d + l_1 + l_2 \\
u_2 = d - l_1 & \qquad & v_2 = - 2y \\
u_3 = x + 2y & \qquad & v_3 = - 2x \\
u_4 = 2x + y & \qquad & v_4 = - x - y
\ea.
\]
The torus $T$ acts on $\CC s_1$ with weight $x + 2y - d + l_1 + l_2$ and on $\CC s_2$ with weight $2x + y - d + l_1 + l_2$.
The list of weights for the action of $T$ on $\T_{[\eta]} \M$ is obtained by subtracting the list
$\{ \chi_0, \ - d + l_1 + l_2 + x + y + z \}$ from the list
\[
\ba{lllll}
d - l_1 - l_2 - x - 2y & \quad & - d + l_2 + 2y + \s^2_{l_1} & \quad & z - x \\
d - l_1 - l_2 - 2x - y & \quad & - d + l_1 + 2x + \s^2_{l_2} & \quad & z - x \\
l_3 - l_1 & \quad & - d + l_2 + x + y + \s^2_y & \quad & z - y \\
l_3 - l_2 & \quad & - d + l_2 + 2x + \s^2_y & \quad & z - y \\
& \quad & - d + l_1 + x + y + \s^2_x & \quad & x - 2y + z \\
& \quad & - d + l_1 + 2y + \s^2_x & \quad & y - 2x + z
\ea.
\]
Finally, we assume that $(q_1, q_2, q_3) = (X^2, XY, XZ)$, that is, we consider morphisms of the form
\[
{
\mathversion{bold}
\boldsymbol{\th (l_1, l_2, d)} = \left[
\ba{cccc}
l_1 & l_2 & 0 & 0 \\
c_{11} d/l_2 XY & c_{12} d/l_1 XY & X & 0 \\
c_{21} d/l_2 XZ & c_{22} d/l_1 XZ & 0 & X \\
c_{31} d/l_2 X^2 & c_{32} d/l_1 X^2 & Y & Z
\ea
\right].
}
\]
We have
\[
\ba{lll}
u_1 = d - l_2 & \qquad & v_1 = - d + l_1 + l_2 \\
u_2 = d - l_1 & \qquad & v_2 = - x - y \\
u_3 = 2x + y & \qquad & v_3 = - x - z \\
u_4 = 2x + z & \qquad & v_4 = - 2x
\ea.
\]
The torus $T$ acts on $\CC s_1$ with weight $2x + y - d + l_1 + l_2$ and on $\CC s_2$ with weight $2x + z - d + l_1 + l_2$.
The list of weights for the action of $T$ on $\T_{[\th]} \M$ is obtained by subtracting the list
$\{ \chi_0, \ - d + l_1 + l_2 + 3x \}$ from the list
\[
\ba{lllll}
d - l_1 - l_2 - 2x - y & \quad & - d + l_2 + x + y + \s^2_x & \quad & y - x \\
d - l_1 - l_2 - 2x - z & \quad  & - d + l_2 + 2x + \s^2_z & \quad & y - x \\
l_3 - l_1 & \quad & - d + l_1 + x + y + \s^2_{l_2} & \quad & z - x \\
l_3 - l_2 & \quad & - d + l_2 + x + z + \s^2_{l_1} & \quad & z - x \\
& \quad & - d + l_1 + 2x + \s^2_y & \quad & 2z - x - y \\
& \quad & - d + l_1 + x + z + \s^2_x & \quad & 2y - x - z
\ea.
\]
All fixed points in $\M_1$ can be obtained from the points $\e$, $\z$, $\eta$, $\th$ above by a permutation of variables.
We have thus found the $T$-representation for all tangent spaces at fixed points in $\M_1$.
To complete the picture, we need to determine which points of $\M_1^{T}$ lie in the closure of
\[
(\M \setminus \overline{\M}_1)^{T} = \M_0^{T}.
\]
This can be done using Proposition \ref{3.2.1}.
For instance, $T$ acts trivially on $N_{[\f]}$ for $\f = \e(X^2 Y^2 Z)$.
This shows that $[\e(X^2 Y^2 Z)]$ belongs to the closure of a surface in $\M_0^{T}$.
There are only three such surfaces: $S$ and two others obtained by interchanging $X$ and $Z$, respectively, $Y$ and $Z$.
The sheaves giving points in $S$ have support given by the equation $\{ X^2 Y^2 Z = 0 \}$.
We deduce that $[\e(X^2 Y^2 Z)]$ belongs to $S$ and that
$[\e(X Y^2 Z^2)]$, $[\e(X^2 Y Z^2)]$ belong to the other two surfaces.
For the remaining points in column five of Table 2 below, we have $\dim N_{\chi_0} = 1$,
which shows that they belong to the closure of affine lines in $\M_0^{T}$.

\begin{table}[!hpt]{Table 2. Fixed points in $\M_{1}$.}
\begin{center}
\begin{tabular}{| c | c | l | c | c |}
\hline
$(q_1, q_2, q_3)$ & $(l_1, l_2)$ & equation of support & affine lines & limit sheaves
\\
\hline
$(XY, XZ, YZ)$ & $(X, Y)$ & $\Sigma^5 \setminus \{ X^5, Y^5, Z^5, XZ^4, YZ^4 \}$ & $XYZ^3$ & $X^2 Y^2 Z$
\\
\cline{2-5}
& $(X, Z)$ & $\Sigma^5 \setminus \{ X^5, Y^5, Z^5, XY^4, Y^4 Z \}$ & $XY^3 Z$ & $X^2 YZ^2$
\\
\cline{2-5}
& $(Y, Z)$ & $\Sigma^5 \setminus \{ X^5, Y^5, Z^5, X^4 Y, X^4 Z \}$ & $X^3 YZ$ & $XY^2 Z^2$
\\
\hline
$(X^2, XY, YZ)$ & $(X, Y)$ & $\Sigma^5 \setminus \{ Y^5, Z^5, XZ^4, X^2 Z^3, YZ^4 \}$ & $X^2 YZ^2$ &
\begin{tabular}{l} $X^3 Y^2$ \\ $X^2 Y^2 Z$ \end{tabular}
\\
\cline{2-5}
& $(X, Z)$ & $\Sigma^5 \setminus \{ Y^5, Z^5, XY^4, XZ^4, Y^4 Z \}$ & $XY^3 Z$ &
\begin{tabular}{l} $X^3 YZ$ \\ $X^2 YZ^2$ \end{tabular}
\\
\cline{2-5}
& $(Y, Z)$ & $\Sigma^5 \setminus \{ X^5, Y^5, Z^5, XZ^4 \}$ & &
\begin{tabular}{l} $X^2 Y^2 Z$ \\ $XY^2 Z^2$ \end{tabular}
\\
\hline
$(X^2, XY, Y^2)$ & $(X, Y)$ &
\begin{tabular}{l} $\Sigma^5 \setminus \{ Z^5, Z^4 X, Z^4 Y, Z^3 X^2$, \\ $\phantom{\Sigma^5 \setminus \{} Z^3 XY, Z^3 Y^2 \}$ \end{tabular} &
\begin{tabular}{l} $XY^2 Z^2$ \\ $X^2 YZ^2$ \end{tabular} &
\begin{tabular}{l} $X^3 Y^2$ \\ $X^2 Y^3$ \end{tabular}
\\
\cline{2-5}
& $(X, Z)$ & $\Sigma^5 \setminus \{ Y^5, Z^5, XZ^4, YZ^4 \}$ & &
\begin{tabular}{l} $X^2 Y^2 Z$ \\ $X^3 YZ$ \end{tabular}
\\
\cline{2-5}
& $(Y, Z)$ & $\Sigma^5 \setminus \{ X^5, Z^5, XZ^4, YZ^4 \}$ & &
\begin{tabular}{l} $X^2 Y^2 Z$ \\ $XY^3 Z$ \end{tabular}
\\
\hline
$(X^2, XY, XZ)$ & $(X, Y)$ &
\begin{tabular}{l} $\Sigma^5 \setminus \{ Y^5, Z^5, XZ^4, YZ^4$, \\ $\phantom{\Sigma^5 \setminus \{} Y^4 Z, Y^2 Z^3, Y^3 Z^2 \}$ \end{tabular} &
\begin{tabular}{l} $X^2 YZ^2$ \\ $XY^3 Z$ \\ $XY^2 Z^2$ \end{tabular} &
\begin{tabular}{l} $X^3 YZ$ \\ $X^3 Y^2$ \end{tabular}
\\
\cline{2-5}
& $(X, Z)$ &
\begin{tabular}{l} $\Sigma^5 \setminus \{ Y^5, Z^5, XY^4, YZ^4$, \\ $\phantom{\Sigma^5 \setminus \{} Y^4 Z, Y^2 Z^3, Y^3 Z^2 \}$ \end{tabular} &
\begin{tabular}{l} $X^2 Y^2 Z$ \\ $XYZ^3$ \\ $XY^2 Z^2$ \end{tabular} &
\begin{tabular}{l} $X^3 YZ$ \\ $X^3 Z^2$ \end{tabular}
\\
\cline{2-5}
& $(Y, Z)$ &
\begin{tabular}{l} $\Sigma^5 \setminus \{ X^5, Y^5, Z^5, YZ^4$, \\ $\phantom{\Sigma^5 \setminus \{} Y^2 Z^3, Y^3 Z^2, Y^4 Z \}$ \end{tabular} &
\begin{tabular}{l} $XYZ^3$ \\ $XY^2 Z^2$ \\ $XY^3 Z$ \end{tabular} &
\begin{tabular}{l} $X^2 Y^2 Z$ \\ $X^2 YZ^2$ \end{tabular}
\\
\hline
\end{tabular}
\end{center}
\end{table}


\subsection{Fixed points in $\M_2$}
\label{3.3}

The list of weights for the action of $T$ on $\T_{\f} W_2$ given by formula (3.1.1) is represented by the array
\[
\ba{rrrrr}
- v_1 - u_1 + \s^1 & \quad & - v_1 - u_2 + \s^1 & \quad & - v_1 - u_3 + \s^1 \\
- v_2 - u_1 + \s^1 & \quad & - v_2 - u_2 + \s^1 & \quad & - v_2 - u_3 + \s^1 \\
- v_3 - u_1 + \s^3 & \quad & - v_3 - u_2 + \s^3 & \quad & - v_3 - u_3 + \s^3
\ea.
\]
The list of weights for the action of $T$ on $\T_{\f} (G_2 \f)$ given by formula (3.1.2) is represented by the array
\[
\ba{lllllrrrr}
\chi_0 & \quad & u_1 - u_2 & \quad & u_1 - u_3 & \quad & \chi_0 & \quad & - v_1 + v_2 \\
u_2 - u_1 & \quad & \chi_0 & \quad & u_2 - u_3 & \quad & - v_2 + v_1 & \quad & \chi_0 \\
u_3 - u_1 & \quad & u_3 - u_2 & \quad & \chi_0 & \quad & - v_3 + v_1 + \s^2 & \quad & - v_3 + v_2 + \s^2
\ea.
\]

\begin{proposition}
\label{3.3.1}
The normal space $N_{[\F]}$ to $\M_2$ at $[\F]$ can be identified with
\[
\H^0(\F(-1))^* \tensor \H^1(\F(-1)).
\]
The torus $T$ acts on $N_{[\F]}$ with weights $u_i + v_3 - x - y - z$, $i = 1, 2, 3$.
\end{proposition}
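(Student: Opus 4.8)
The plan is to identify $\overline{\M}_2 := \M_2 \cup \M_3$ near $[\F]$ with a degeneracy locus of cohomology and to compute its normal bundle from the relevant $\Hom$ bundle. Tensoring the resolutions recalled in Section~\ref{section_2} by $\O(-1)$ shows that the sheaves giving points of $\M_0$ or $\M_1$ satisfy $\H^0(\F(-1)) = 0$, whereas those giving points of $\M_2$ or $\M_3$ satisfy $\H^0(\F(-1)) \isom \H^0(\O)$ and $\H^1(\F(-1)) \isom \H^2(3\O(-3))$; in particular $\h^0(\F(-1)) = 1$ and $\h^1(\F(-1)) = 3$ along $\overline{\M}_2$, and
\[
\overline{\M}_2 = \{ [\F] \in \M \mid \h^0(\F(-1)) \geq 1 \}.
\]
By \cite{illinois} this locus has codimension $3$ in $\M$ and is smooth along $\M_2$.

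Next I would construct, $T$-equivariantly and on a local presentation $Q \to \M$ of $\M$ near $[\F]$ carrying a universal sheaf $\mathcal F$ (for instance an open subset of an appropriate Quot scheme), a two-term complex $[E^0 \stackrel{\phi}{\lra} E^1]$ of locally free sheaves representing $R\pi_* \big(\mathcal F(-1)\big)$ and computing the cohomology of $\F'(-1)$ for every nearby $[\F']$; choosing it minimal over $[\F]$, we may assume $\phi$ vanishes there, with $\rank E^0 = \h^0(\F(-1)) = 1$ and $\rank E^1 = \h^1(\F(-1)) = 3$. Since $\h^0(\F'(-1)) = \rank E^0 - \rank \phi_{[\F']}$, the preimage of $\overline{\M}_2$ in $Q$ is exactly the zero scheme of the section $\phi$ of the rank-three bundle $(E^0)^* \tensor E^1$. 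As this locus has codimension $3 = \rank\big( (E^0)^* \tensor E^1 \big)$, the section $\phi$ is regular, so its normal bundle is $\big( (E^0)^* \tensor E^1 \big)$ restricted to it; descending to $\M$ and evaluating at $[\F]$, where $E^0_{[\F]} = \H^0(\F(-1))$ and $E^1_{[\F]} = \H^1(\F(-1))$ because $\phi$ vanishes at $[\F]$, this yields the $T$-equivariant isomorphism $N_{[\F]} \isom \H^0(\F(-1))^* \tensor \H^1(\F(-1))$. Equivalently, $d\phi$ at $[\F]$ is, up to the same twist, the obstruction map $\Ext^1(\F,\F) \to \Ext^1(\O(1),\F) = \H^1(\F(-1))$ given by composition with a generator of $\Hom(\O(1),\F) = \H^0(\F(-1))$, and $N_{[\F]}$ is its image.

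It remains to compute the two characters, using the $T$-equivariant resolution $0 \to 3\O(-2) \stackrel{\f}{\lra} 2\O(-1) \oplus \O(1) \to \F \to 0$ furnished by $t\f = v(t)\f u(t)$. Comparing with the weight array for $\T_{\f} W_2$ above, and normalising so that $\H^0(\O(d)) = \Sym^d V^*$ has weights $\s^d$, the $i$-th source summand carries the character $u_i$ and the $j$-th target summand carries $-v_j$. Twisting the resolution by $\O(-1)$ and taking cohomology, using $\H^0(3\O(-3)) = \H^1(3\O(-3)) = 0$ and $\H^1(2\O(-2) \oplus \O) = \H^2(2\O(-2) \oplus \O) = 0$, we get $T$-equivariant isomorphisms $\H^0(\F(-1)) \isom \H^0(\O) \tensor \chi_{-v_3}$, of weight $-v_3$, and $\H^1(\F(-1)) \isom \bigoplus_{i=1}^{3} \H^2(\O(-3)) \tensor \chi_{u_i}$. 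Since the Euler sequence gives $\omega_{\PP^2} \isom \O(-3) \tensor \chi_{x+y+z}$ equivariantly, Serre duality shows that $\H^2(\PP^2, \O(-3))$ has weight $-x-y-z$, so the weights of $\H^1(\F(-1))$ are $u_i - x - y - z$, $i = 1, 2, 3$. Therefore $N_{[\F]} = \H^0(\F(-1))^* \tensor \H^1(\F(-1))$ has weights $u_i + v_3 - x - y - z$, $i = 1, 2, 3$.

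The step I expect to be the crux is the transversality input: the clean normal-bundle formula is valid only because $\overline{\M}_2$ realises the expected codimension $3$ of the degeneracy locus — equivalently, because the obstruction map $\Ext^1(\F,\F) \to \H^1(\F(-1))$ is surjective — and this is exactly where the codimension count of \cite{illinois} is indispensable; a priori $N_{[\F]}$ could be a proper subspace of $\H^0(\F(-1))^* \tensor \H^1(\F(-1))$. The only other point requiring care is bookkeeping: tracking the $T$-linearisations of the direct summands and of $\omega_{\PP^2}$ so that the Serre-duality twist contributes exactly $-x-y-z$.
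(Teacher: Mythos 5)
Your identification of the torus weights is correct and consistent with the paper's conventions (weight $-v_3$ on $\H^0(\F(-1))$, weights $u_i - x - y - z$ on $\H^1(\F(-1))$, hence $u_i + v_3 - x - y - z$ on the tensor product), but the proof of the first statement has a genuine gap at exactly the step you single out as the crux, and the way you propose to close it does not work. From the fact that $\overline{\M}_2 = \{\h^0(\F(-1)) \ge 1\}$ has codimension $3 = \rank\bigl((E^0)^* \tensor E^1\bigr)$ you conclude that the section $\phi$ is regular and that the normal bundle of its zero scheme is the restricted bundle. Regularity in this sense only makes $Z(\phi)$ a Cohen--Macaulay local complete intersection of the expected codimension; it does not force $Z(\phi)$ to be reduced, nor $d\phi$ to be surjective at $[\F]$. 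Your asserted equivalence between ``the expected codimension is realised'' and ``the obstruction map $\Ext^1(\F,\F) \to \H^0(\F(-1))^* \tensor \H^1(\F(-1))$ is surjective'' is false: the section $(s^2, t, u)$ of the trivial rank-three bundle on a smooth fourfold with coordinates $s,t,u,w$ has zero scheme of the expected codimension $3$ whose reduction is the smooth curve $\{s=t=u=0\}$, yet the differential has rank $2$ along it, so the induced map from the normal space of the reduced locus to the bundle fibre is neither injective nor surjective. In your situation the same failure would mean that the natural map $N_{[\F]} \to \H^0(\F(-1))^* \tensor \H^1(\F(-1))$ is not an isomorphism, and nothing in the codimension count of \cite{illinois} excludes this. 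What is missing is a proof that the composition map $\Ext^1(\F,\F) \to \Hom\bigl(\H^0(\F(-1)), \H^1(\F(-1))\bigr)$ is surjective with kernel exactly $\T_{[\F]}\M_2$, equivalently that $Z(\phi)$ is smooth, not merely of the right dimension, at $[\F]$.

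The paper supplies precisely this input by a different route: it applies the duality $\F \mapsto \F^\D(1)$, which sends $[\F]$ to a point of the analogous stratum of $\M_{\PP^2}(5,2)$ with resolution $0 \to \O(-3) \oplus 2\O(-1) \to 3\O \to \F^\D(1) \to 0$, and invokes \cite[Proposition 6.2]{choi_maican}, where the surjection from $\Ext^1$ onto $\H^0(\F^\D(1))^* \tensor \H^1(\F^\D(1))$ is constructed from the evaluation sequence (using the vanishing of $\Ext^1$ against the kernel of evaluation) and its kernel is identified with the tangent space of the stratum via the horseshoe lemma and a dimension count; the same mechanism is carried out in detail in the paper's proof of Proposition \ref{3.4.1} for $\M_3$. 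Serre duality then converts the answer into $\H^0(\F(-1))^* \tensor \H^1(\F(-1))$ and gives the weights. So your degeneracy-locus framework and weight bookkeeping are fine as far as they go, but to be complete you must either prove the surjectivity and kernel statement directly for $\F$ (in the style of Proposition \ref{3.4.1}) or dualise and quote \cite[Proposition 6.2]{choi_maican} as the paper does; the codimension count alone cannot replace that argument.
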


\begin{proof}
We apply \cite[Proposition 6.2]{choi_maican} to the sheaf $\F^\D(1)$, which gives a point in $\M_{\PP^2}(5,2)$.
We have a resolution
\[
0 \lra \O(-3) \oplus 2\O(-1) \stackrel{\f^\TR}{\lra} 3\O \lra \F^\D(1) \lra 0.
\]
Let $N^\D$ denote the normal space to $\M_2^\D \subset \M_{\PP^2}(5,2)$ at $[\F^\D(1)]$.
As $T$-modules, $N$ and $N^\D$ are isomorphic.
This proves the second part of the proposition.
Moreover,
\begin{align*}
N \isom \N^\D & \isom \H^0(\F^\D(1))^* \tensor \H^1(\F^\D(1)) \qquad \text{by \cite[Proposition 6.2]{choi_maican}} \\
& \isom \H^1(\F(-1)) \tensor \H^2(\omega_{\PP^2})^* \tensor \H^0(\F(-1))^* \tensor \H^2(\omega_{\PP^2}) \quad
\text{by Serre Duality} \\
& \isom \H^1(\F(-1)) \tensor \H^0(\F(-1))^*.
\qedhere
\end{align*}
\end{proof}

\noindent
Recall from Proposition \ref{2.4.1} that an irreducible component of $\M_2^{T}$ is uniquely determined by
$q_1$, $q_2$, $q_3$ and $d$.
The ideal $(q_1, q_2, q_3)$ defines a $T$-invariant zero-dimensional scheme $\Z$ of length $3$ that is not contained
in a line or is of the form $(lX, lY, lZ)$, $l \in \{ X, Y, Z \}$.
Assume firstly that $\f$ has the form
\[
{
\mathversion{bold}
\boldsymbol{\iota(d)} = \left[
\ba{ccc}
X & Y & 0 \\
X & 0 & Z \\
c_1 d/YZ & c_2 d/XZ & c_3 d/XY
\ea
\right]
}
\]
corresponding to the case when $\Z$ consists of three distinct points. Clearly, we have
\[
\ba{lll}
u_1 = x & \qquad & v_1 = 0 \\
u_2 = y & \qquad & v_2 = 0 \\
u_3 = z & \qquad & v_3 = d - x - y - z
\ea.
\]
The list of weights for the action of $T$ on $\T_{[\iota(d)]} \M$ is obtained by removing an extra copy of $\chi_0$
from the table
\[
\ba{lllll}
\s^0 & \quad & d - x - 2y - 2z & \quad & - d + y + z + \s^3_x \\
& & d - 2x - y - 2z & \quad & - d + x + z + \s^3_y \\
& & d - 2x - 2y - z & \quad & - d + x + y + \s^3
\ea.
\]
We next examine the case when $\Z$ is the union of a double point and a simple point.
Consider thus morphisms of the form
\[
{
\mathversion{bold}
\boldsymbol{\kappa (d)} = \left[
\ba{ccc}
Y & 0 & X \\
0 & X & Z \\
c_1 d/X^2 & c_2 d/YZ & c_3 d/XY
\ea
\right].
}
\]
We have
\[
\ba{lll}
u_1 = y - x & \qquad & v_1 = x \\
u_2 = x - z & \qquad & v_2 = z \\
u_3 = 0 & \qquad & v_3 = d - x - y
\ea.
\]
The list of weights for the action of $T$ on $\T_{[\kappa(d)]} \M$ is obtained by removing an extra copy of $\chi_0$
from the array
\[
\ba{lllllll}
x - y & \quad & y - z & \quad & d - 3x - y - z & \quad & - d + y + z + \s^3_x \\
y - x & \quad & z - y & \quad & d - x - 2y - 2z & \quad & - d + 2x + \s^3_y \\
z - x & \quad & 2z - 2x & \quad & d - 2x - 2y - z & \quad & - d + x + y + \s^3
\ea.
\]
We assume now that $\Z$ is a triple point supported at $(0, 0, 1)$,
that is, we consider morphisms of the form
\[
{
\mathversion{bold}
\boldsymbol{\lambda (d)} = \left[
\ba{ccc}
X & 0 & Y \\
0 & Y & X \\
c_1 d/Y^2 & c_2 d/X^2 & c_3 d/XY
\ea
\right].
}
\]
We have
\[
\ba{lll}
u_1 = x - y & \qquad & v_1 = y \\
u_2 = y - x & \qquad & v_2 = x \\
u_3 = 0 & \qquad & v_3 = d - x - y
\ea.
\]
The list of weights for the action of $T$ on $\T_{[\lambda(d)]} \M$ is obtained by removing an extra copy of $\chi_0$
from the array
\[
\ba{lllllll}
z - x & \quad & z - y & \quad & d - 3x - y - z & \quad & - d + 2x + \s^3_y \\
z - x & \quad & z - y & \quad & d - x - 3y - z & \quad & - d + x + y + \s^3 \\
z - 2x + y & \quad & z + x - 2y & \quad & d - 2x - 2y - z & \quad & - d + 2y + \s^3_x
\ea.
\]
Finally, we assume that $(q_1, q_2, q_3) = (X^2, XY, XZ)$, that is, we consider morphisms of the form
\[
{
\mathversion{bold}
\boldsymbol{\mu (d)} = \left[
\ba{ccc}
X & 0 & Y \\
0 & X & Z \\
c_1 d/XY & c_2 d/XZ & c_3 d/X^2
\ea
\right].
}
\]
We have
\[
\ba{lll}
u_1 = x - y & \qquad & v_1 = y \\
u_2 = x - z  & \qquad & v_2 = z \\
u_3 = 0 & \qquad & v_3 = d - 2x
\ea.
\]
The list of weights for the action of $T$ on $\T_{[\mu(d)]} \M$ is obtained by removing an extra copy of $\chi_0$
from the table
\[
\ba{lllllll}
z - x & \quad & y - x & \quad & d - 2x - 2y - z & \quad & - d + x + y + \s^3_x \\
z - x & \quad & y - x & \quad & d - 2x - y - 2z & \quad & - d + 2x + \s^3 \\
2z - x - y & \quad & 2y - x - z & \quad & d - 3x - y - z & \quad & - d + x + z + \s^3_x
\ea.
\]
All fixed points in $\M_2$ can be obtained from the points $\iota$, $\kappa$, $\lambda$, $\mu$ above by a permutation of variables.
We have thus found the $T$-representation for all tangent spaces at fixed points in $\M_2$.
To complete the picture, we need to determine which points of $\M_2^{T}$ lie in the closure of
\[
(\M \setminus \overline{\M}_2)^{T} = (\M_0 \cup \M_1)^{T}.
\]
This can be done using Proposition \ref{3.3.1}.
For all points in column four of Table 3 below, we have $\dim N_{\chi_0} = 1$.
This shows that these points lie in the closure of affine lines in $(\M_0 \cup \M_1)^{T}$.

\begin{table}[!hpt]{Table 3. Fixed points in $\M_{2}$.}
\begin{center}
\begin{tabular}{| c | l | c | c |}
\hline
$(q_1, q_2, q_3)$ & equation of support & affine lines & limit sheaves
\\
\hline
$(XY, XZ, YZ)$ & $\Sigma^5 \setminus \{ X^5, Y^5, Z^5 \}$ & &
\begin{tabular}{l} $X^2 Y^2 Z$ \\ $XY^2 Z^2$ \\ $X^2 YZ^2$ \end{tabular}
\\
\hline
$(X^2, XY, YZ)$ & $\Sigma^5 \setminus \{ Y^5, Z^5, XZ^4 \}$ & &
\begin{tabular}{l} $X^2 Y^2 Z$ \\ $XY^2 Z^2$ \\ $X^3 YZ$ \end{tabular}
\\
\hline
$(X^2, XY, Y^2)$ & $\Sigma^5 \setminus \{ Z^5, Z^4 X, Z^4 Y \}$ & &
\begin{tabular}{l} $X^2 Y^2 Z$ \\ $XY^3 Z$ \\ $X^3 YZ$ \end{tabular}
\\
\hline
$(X^2, XY, XZ)$ & $\Sigma^5 \setminus \{ Y^5, Y^4 Z, Y^3 Z^2, Y^2 Z^3, YZ^4, Z^5 \}$ &
\begin{tabular}{l} $XYZ^3$ \\ $XY^2 Z^2$ \\ $XY^3 Z$ \end{tabular} &
\begin{tabular}{l} $X^2 Y^2 Z$ \\ $X^2 YZ^2$ \\ $X^3 YZ$ \end{tabular}
\\
\hline
\end{tabular}
\end{center}
\end{table}


\subsection{Fixed points in $\M_3$}
\label{3.4}

The list of weights for the action of $T$ on $\T_{\f} W_3$ given by formula (3.1.1) is represented by the array
\[
\ba{rrr}
- v_1 - u_1 + \s^3 & \quad & - v_1 - u_2 + \s^1 \\
- v_2 - u_1 + \s^4 & \quad & - v_2 - u_2 + \s^2
\ea.
\]
The list of weights for the action of $T$ on $\T_{\f} (G_3 \f)$ given by formula (3.1.2) is represented by the array
\[
\ba{lllll}
\chi_0 & \quad & \phantom{-} \chi_0 \\
u_2 - u_1 + \s^2 & \quad & - v_2 + v_1 & \quad & \chi_0
\ea.
\]

\begin{proposition}
\label{3.4.1}
The normal space $N_{[\F]}$ to $\M_3$ at $[\F]$ can be identified with
\[
\H^0(\F)^* \tensor \H^1(\F).
\]
The torus $T$ acts on $N_{[\F]}$ with weights
\[
u_1 + v_1 - x - y - z, \quad u_1 + v_2 - 2x - y - z, \quad u_1 + v_2 - x - 2y - z, \quad u_1 + v_2 - x - y - 2z.
\]
\end{proposition}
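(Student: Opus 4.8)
The plan is to follow the strategy of Proposition \ref{3.3.1}: identify $\M_3$ with a cohomological jumping locus, invoke \cite[Proposition 6.2]{choi_maican} to describe its normal bundle, and compute the weights from the equivariant structure of the presenting complex. First I would record the cohomology of $\F$. From the resolution
\[
0 \lra \O(-3) \oplus \O(-1) \stackrel{\f}{\lra} \O \oplus \O(1) \lra \F \lra 0
\]
and the vanishings $\H^0(\O(-3)\oplus\O(-1)) = \H^1(\O(-3)\oplus\O(-1)) = \H^1(\O\oplus\O(1)) = \H^2(\O\oplus\O(1)) = 0$ one gets $\H^0(\F) \isom \H^0(\O\oplus\O(1))$ of dimension $4$, $\H^1(\F) \isom \H^2(\O(-3))$ of dimension $1$, and $\H^2(\F) = 0$; in particular $\dim\big(\H^0(\F)^* \tensor \H^1(\F)\big) = 4$, the codimension of $\M_3$. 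The same bookkeeping applied to the presentations of the sheaves in $\M_0$, $\M_1$, $\M_2$ gives $\H^1 = 0$ there, so that $\M_3$ is precisely the locus $\{[\F] : \H^1(\F) \neq 0\}$ in $\M_{\PP^2}(5,3)$.

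The heart of the argument is the identification of the normal space, and this is where \cite[Proposition 6.2]{choi_maican} does the work, exactly as in Proposition \ref{3.3.1}. The point is that $\M_3$ is the degeneracy locus where a morphism of vector bundles on $\M_{\PP^2}(5,3)$ --- having fibrewise kernel $\H^0(\F)$ and cokernel $\H^1(\F)$, and existing because the sheaves in question have vanishing $\H^2$ --- drops rank by one, so that the normal space to $\M_3$ at a point where $\h^1(\F)=1$ is $\Hom(\H^0(\F),\H^1(\F)) = \H^0(\F)^*\tensor\H^1(\F)$, $T$-equivariantly. Since the target of the presentation of $\F$ carries no negative twists, one can apply this directly to $\F$; alternatively, just as in Proposition \ref{3.3.1}, one passes to $\F^\D$ (or a suitable twist of it) in a moduli space isomorphic to $\M_{\PP^2}(5,3)$, applies \cite[Proposition 6.2]{choi_maican} there, and transports the answer back using that the duality of \cite{rendiconti} is $T$-equivariant together with the Serre-duality identifications $\H^0(\F^\D) \isom \Ext^1(\F,\omega_{\PP^2}) \isom \H^1(\F)^*$ and $\H^1(\F^\D) \isom \Ext^2(\F,\omega_{\PP^2}) \isom \H^0(\F)^*$ (the local-to-global spectral sequence collapsing because $\F$ is pure of dimension one, so that $\F^\D = \mathcal{E}xt^1(\F,\omega_{\PP^2})$). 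I expect this step --- making the identification of $N_{[\F]}$ with $\H^0(\F)^* \tensor \H^1(\F)$ precise and $T$-equivariant --- to be the only real obstacle; the rest is bookkeeping.

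For the weights I would use the equivariant structure read off from the array for $\T_{\f} W_3$ in \ref{3.4}: the summands $\O(-3)$, $\O(-1)$ of the source carry the characters $u_1$, $u_2$, and the summands $\O$, $\O(1)$ of the target carry $-v_1$, $-v_2$. Hence $\H^0(\F) \isom \H^0(\O\oplus\O(1))$ has weights $-v_1$ and $-v_2 + x$, $-v_2 + y$, $-v_2 + z$, while $\H^1(\F) \isom \H^2(\O(-3)) \tensor \langle u_1 \rangle$ is one-dimensional of weight $u_1 - x - y - z$, since equivariantly $\omega_{\PP^2} \isom \O(-3) \tensor \langle x + y + z \rangle$ and $\H^2(\omega_{\PP^2})$ is the trivial character. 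Tensoring $\H^0(\F)^*$ with $\H^1(\F)$ then gives the four characters
\[
u_1 + v_1 - x - y - z, \quad u_1 + v_2 - 2x - y - z, \quad u_1 + v_2 - x - 2y - z, \quad u_1 + v_2 - x - y - 2z,
\]
as claimed.
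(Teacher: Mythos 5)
Your cohomological bookkeeping is correct, and so is the final weight count: granting the identification $N_{[\F]} \isom \H^0(\F)^* \tensor \H^1(\F)$, the weights $-v_1$, $-v_2+x$, $-v_2+y$, $-v_2+z$ on $\H^0(\F)$ and $u_1-x-y-z$ on $\H^1(\F)$ give exactly the four characters claimed, in agreement with the paper. The gap is in the identification itself, which is the real content of the proposition. Knowing that $\M_3$ is set-theoretically the jumping locus $\{\h^1(\F) \neq 0\}$ of a two-term complex of vector bundles, and that $\M_3$ is smooth of codimension $4$ (the expected codimension), does not yet yield $N_{[\F]} \isom \Hom(\H^0(\F), \H^1(\F))$: the scheme-theoretic degeneracy locus could a priori be non-reduced along $\M_3$, in which case the intrinsic derivative $\e \colon \Ext^1(\F,\F) \to \H^0(\F)^* \tensor \H^1(\F)$ (whose kernel is the tangent space of the determinantal scheme) would not induce an isomorphism on the normal space. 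What must be proved is either that $\e$ is surjective, or that $\operatorname{Ker}(\e)$ equals $\T_{[\F]}\M_3$; neither is addressed in your proposal, and neither follows from the general degeneracy-locus formalism. Your fallback of quoting \cite[Proposition 6.2]{choi_maican} does not close this either: Proposition \ref{3.3.1} passes to $\F^\D(1)$ precisely because that dual twist has a resolution $0 \to \O(-3)\oplus 2\O(-1) \to 3\O \to \F^\D(1) \to 0$ of the shape that citation handles, whereas for $[\F] \in \M_3$ no twist of $\F$ or of $\F^\D$ acquires such a resolution --- dualizing $0 \to \O(-3)\oplus\O(-1) \to \O\oplus\O(1) \to \F \to 0$ returns a resolution of the same shape --- which is exactly why the paper redoes the whole argument instead of citing or dualizing.

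Concretely, the paper supplies the two ingredients your sketch is missing. First, surjectivity of $\e$ is obtained from $\Ext^1(\K,\F)=0$, where $\K = \Ker\bigl(\H^0(\F)\tensor\O \to \F\bigr)$; this rests on the snake-lemma comparison with the Euler sequence, producing $0 \to \Omega^1(1) \to \K \to \O(-3)\oplus\O(-1) \to 0$, together with explicit $\Ext$-vanishings. Second, the inclusion $\operatorname{Ker}(\e) \subseteq \T_{[\F]}\M_3$: an extension $\E$ of $\F$ by $\F$ with class in $\operatorname{Ker}(\e)$ has $\h^0(\E)=8$, and Lemma \ref{3.4.2} --- a genuinely nontrivial statement, proved by a page of Beilinson-monad analysis --- gives $\h^0(\E(-1))=2$; only with both facts does the horseshoe lemma reconstruct a resolution of $\E$ of the $\M_3$ shape, exhibiting the class as a tangent vector to $\M_3$, and equality then follows from the dimension count $22=22$. (The presence of the $\O(1)$ summand in the target is precisely what forces control of $\h^0(\E(-1))$ in addition to $\h^0(\E)$, so your remark that one can ``apply this directly to $\F$'' because the target has no negative twists misses where the difficulty lies; alternatively one could prove the easy inclusion $\T_{[\F]}\M_3 \subseteq \operatorname{Ker}(\e)$ by deforming the resolution and combine it with surjectivity of $\e$, but some such argument is indispensable.) Since your proposal contains neither the surjectivity argument nor any substitute for Lemma \ref{3.4.2}, the identification of $N_{[\F]}$, and hence the proposition, is not established.
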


\begin{proof}
The argument is analogous to the argument at \cite[Proposition 6.2]{choi_maican}.
Let $\k \colon \H^0(\F) \tensor \O \to \F$ be the canonical morphism and let $\K = \Ker(\k)$.
Applying the snake lemma to the diagram
\[
\xymatrix
{
& & 0 \ar[d] & 0 \ar[d] \\
& & \Omega^1(1) \ar[d] & \K \ar[d] \\
& 0 \ar[r] & 4\O \ar[r] \ar[d] & \H^0(\F) \tensor \O \ar[r] \ar[d]^-{\k} & 0 \\
0 \ar[r] & \O(-3) \oplus \O(-1) \ar[r]^-{\f} & \O \oplus \O(1) \ar[r] \ar[d] & \F \ar[r] \ar[d] & 0 \\
& & 0 & 0
}
\]
we get the exact sequence
\[
0 \lra \Omega^1(1) \lra \K \lra \O(-3) \oplus \O(-1) \lra 0.
\]
We claim that $\Ext^1(\K, \F) = 0$.
This follows from the long $\Ext(\_, \F)$-sequence associated to the above short exact sequence
and from the vanishing of
\[
\Ext^1(\O(-3), \F), \qquad \Ext^1(\O(-1), \F), \qquad \Ext^1(\Omega^1(1), \F).
\]
To see that the last group vanishes we apply the long $\Ext(\Omega^1(1),\_)$-sequence to the short
exact sequence expressing $\F$ as the cokernel of $\f$ and we use the vanishing of
\[
\Ext^1(\Omega^1(1),\O), \ \
\Ext^1(\Omega^1(1),\O(1)), \ \
\Ext^2(\Omega^1(1),\O(-3)), \ \
\Ext^2(\Omega^1(1),\O(-1)).
\]
Applying the long $\Ext(\_, \F)$-sequence to the exact sequence
\[
0 \lra \K \lra \H^0(\F) \tensor \O \stackrel{\k}{\lra} \F \lra 0
\]
we obtain a surjective map $\e \colon \Ext^1(\F, \F) \to \H^0(\F)^* \tensor \H^1(\F)$.
Consider an extension sheaf
\[
0 \lra \F \lra \E \stackrel{\pi}{\lra} \F \lra 0.
\]
Assume that the extension class of $\E$ belongs to $\Ker(\e)$.
Then, as in \cite[Proposition 6.2]{choi_maican}, we deduce that $\H^0(\pi)$ has a splitting.
Thus, $\h^0(\E) = 8$ and, from Lemma \ref{3.4.2} below, we have $\h^0(\E(-1)) = 2$.
It becomes clear now that we can apply the horseshoe lemma to the above extension in order to obtain
a resolution of the form
\[
0 \to (\O(-3) \oplus \O(-1)) \oplus (\O(-3) \oplus \O(-1)) \stackrel{\psi}{\to} (\O \oplus \O(1)) \oplus (\O \oplus \O(1)) \to
\E \to 0,
\]
\[
\psi = \left[
\ba{cc}
\f & w \\
0 & \f
\ea
\right].
\]
Thus, $\E$ gives a tangent vector to $\M_3$, namely the image of $w$ in $W_3/\T_e G_3 \isom \T_{[\F]} \M_3$.
Since $\operatorname{Ker}(\e)$ is contained in $\T_{[\F]} \M_3$ and both spaces have dimension $22$,
we conclude that $\operatorname{Ker}(\e) = \T_{[\F]} \M_3$.
This proves the first part of the proposition.

To determine the action of $T$ on $N$ consider the commutative diagram
\[
\xymatrix
{
0 \ar[r] & \O(-3) \oplus \O(-1) \ar[r]^-{\f} \ar[d]^-{u(t)^{-1}} & \O \oplus \O(1) \ar[r] \ar[d]^-{v(t)} & \F \ar[r] \ar[d]^-{\y(t)} & 0 \\
0 \ar[r] & \O(-3) \oplus \O(-1) \ar[r]^-{t \f} & \O \oplus \O(1) \ar[r] & t \F \ar[r] & 0
}.
\]
The action of $T$ on $\H^0(\F)$ is given by $t s = \y(t)^{-1} \mu_{t^{-1}}^* (s)$.
Under the identification $\CC \oplus V^* \isom \H^0(\F)$, we have $t s = v(t)^{-1} \mu_{t^{-1}}^* (s)$.
Thus, $T$ acts on $\H^0(\F)$ with weights $- v_1$, $- v_2 + x$, $- v_2 + y$, $- v_2 + z$.
As in \cite[Proposition 6.2]{choi_maican}, $T$ acts on $\H^1(\F)$ with weight $u_1 - x - y - z$.
\end{proof}

\begin{lemma}
\label{3.4.2}
Assume that $\F$ gives a point in $\M_3$. Let $\E$ be an extension of $\F$ by $\F$.
If $\h^0(\E)=8$, then $\h^0(\E(-1))=2$.
\end{lemma}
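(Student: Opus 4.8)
The plan is to read $\h^0(\E)$ and $\h^0(\E(-1))$ off the long exact cohomology sequences of the extension $0\to\F\to\E\to\F\to0$ and of its twist by $\O(-1)$, and to pass information between the two by exploiting that the relevant connecting homomorphisms commute with multiplication by a linear form.

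First I would record the cohomology of $\F$ and of $\F(-1)$. Tensoring the resolution
\[
0 \lra \O(-3) \oplus \O(-1) \stackrel{\f}{\lra} \O \oplus \O(1) \lra \F \lra 0
\]
with $\O$ and with $\O(-1)$ and using that $\H^1$ of a line bundle on $\PP^2$ vanishes, that $\H^2(\F(k))=0$ (since $\F$ is supported on a curve), and Serre duality, one finds $\h^0(\F)=4$, $\h^1(\F)=1$, $\h^0(\F(-1))=1$, $\h^1(\F(-1))=3$, together with identifications $\H^1(\F)\cong\H^2(\O(-3))$ and $\H^1(\F(-1))\cong\H^2(\O(-4))$ which are natural in the twist, i.e. carry multiplication by a linear form $l$ to multiplication by $l$. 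The long exact sequences of the extension and of its twist then give
\[
\h^0(\E)=8-\rank\partial_0, \qquad \h^0(\E(-1))=2-\rank\partial,
\]
where $\partial_0\colon\H^0(\F)\to\H^1(\F)$ and $\partial\colon\H^0(\F(-1))\to\H^1(\F(-1))$ are the connecting maps, both of rank at most $1$ since $\h^1(\F)=1=\h^0(\F(-1))$. So the hypothesis $\h^0(\E)=8$ is equivalent to $\partial_0=0$, and the desired conclusion $\h^0(\E(-1))=2$ is equivalent to $\partial=0$.

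The key point is that, for any linear form $l$, tensoring the twisted extension $0\to\F(-1)\to\E(-1)\to\F(-1)\to0$ with the map $\O\stackrel{l}{\lra}\O(1)$ gives a morphism of short exact sequences whose components on sub and on quotient are multiplication by $l$; naturality of the connecting homomorphism then yields $\partial_0(l s)=l\cdot\partial(s)$ for every $s\in\H^0(\F(-1))$. Since $\partial_0=0$, the class $\partial(s)$ is annihilated by every linear form. Taking $s$ to be a generator of the one-dimensional space $\H^0(\F(-1))$, we get
\[
\partial(s)\in\bigcap_{l}\ker\bigl(\H^1(\F(-1))\stackrel{l}{\lra}\H^1(\F)\bigr),
\]
the intersection being over all linear forms $l$.

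It remains to see that this intersection vanishes, and here the explicit identifications from the first step enter. Through them the intersection becomes $\bigcap_{l}\ker\bigl(\H^2(\O(-4))\stackrel{l}{\lra}\H^2(\O(-3))\bigr)$, and Serre duality identifies $\H^2(\O(-4))$ with $\H^0(\O(1))^*$, identifies $\H^2(\O(-3))$ with $\H^0(\O)^*$, and identifies multiplication by $l$ with the transpose of the map $\H^0(\O)\stackrel{l}{\lra}\H^0(\O(1))$, $c\mapsto cl$. The kernel of multiplication by $l$ is therefore the annihilator in $\H^0(\O(1))^*$ of the line $\CC l\subset\H^0(\O(1))$, and intersecting over all $l$ leaves only the annihilator of $\H^0(\O(1))$, namely $0$. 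Hence $\partial(s)=0$, so $\partial=0$ and $\h^0(\E(-1))=2$. The one point that needs genuine care is the first step: that the identifications of $\H^1(\F)$ and $\H^1(\F(-1))$ with cohomology of line bundles, and the compatibility of $\partial$ and $\partial_0$ with multiplication by $l$, are truly functorial, so that the Serre-duality computation applies; granting this, the remainder is a short diagram chase.
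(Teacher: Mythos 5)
Your argument is correct, and it takes a genuinely different --- and considerably more economical --- route than the paper. The paper proves Lemma \ref{3.4.2} by contradiction: assuming $\h^0(\E(-1))=1$, it sets $m=\h^1(\E\tensor\Omega^1(1))$, runs the Beilinson spectral sequence and free monad to pin down $m\in\{5,6\}$, handles $m=5$ via canonical forms for $\psi_{21}$ taken from \cite{illinois}, and handles $m=6$ by building successive resolutions of $\E$ and extracting a contradiction with the snake lemma; this is heavy machinery that leans on external results. You instead read everything off the standard resolution $0\to\O(-3)\oplus\O(-1)\to\O\oplus\O(1)\to\F\to 0$: it gives $\h^0(\F)=4$, $\h^1(\F)=1$, $\h^0(\F(-1))=1$, $\h^1(\F(-1))=3$, together with identifications $\H^1(\F)\isom\H^2(\O(-3))$ and $\H^1(\F(-1))\isom\H^2(\O(-4))$, so that the hypothesis $\h^0(\E)=8$ and the conclusion $\h^0(\E(-1))=2$ become the vanishing of the connecting maps $\partial_0$ and $\partial$ (each of rank at most one). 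The commutative ladder obtained by multiplying the twisted extension by a linear form $l$ gives $\partial_0(ls)=l\,\partial(s)$, and Serre duality (or a direct \v{C}ech computation: $\H^2(\O(-4))$ is spanned by the monomials $X^{-2}Y^{-1}Z^{-1}$, $X^{-1}Y^{-2}Z^{-1}$, $X^{-1}Y^{-1}Z^{-2}$, and no nonzero combination is killed by all of $X,Y,Z$) shows $\bigcap_l\ker\bigl(l\colon\H^2(\O(-4))\to\H^2(\O(-3))\bigr)=0$, forcing $\partial=0$. The delicate points all check out: the multiplication-by-$l$ diagrams of short exact sequences commute by functoriality, and the identifications of the $\H^1$'s with $\H^2$ of line bundles are compatible with $l$ because multiplication by $l$ lifts termwise to the twisted resolution. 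What your approach buys is brevity and self-containedness --- no Beilinson monads, no canonical matrix forms, no appeal to \cite{illinois}; the paper's longer argument yields nothing extra for this particular lemma (the resolution of $\E$ that is actually needed later is produced separately, via the horseshoe lemma, in the proof of Proposition \ref{3.4.1}), so your proof could replace it outright.
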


\begin{proof}
Assume, instead, that $\h^0(\E(-1))=1$.
Denote $m = \h^1(\E \tensor \Omega^1(1))$.
The $\operatorname{E}^1$-level of the Beilinson spectral sequence \cite[(2.2.3)]{dedicata}
converging to $\E$ reads
\[
\xymatrix
{
5\O(-2) \ar[r]^-{\f_1} & m\O(-1) \ar[r]^-{\f_2} & 2\O \\
\O(-2) \ar[r]^-{\f_3} & (m+2) \O(-1) \ar[r]^-{\f_4} & 8\O
}.
\]
Note that $m \ge 5$ because $\f_2$ is surjective.
Note that $m \le 7$ because $\E$ maps surjectively to $\Ker(\f_2)/\Im(\f_1)$.
If $m=7$, then the first row above is a monad, so its cohomology has slope $-2/3$,
destabilising $\E$. Thus, $m=5$ or $6$.

Assume, firstly, that $m=5$. Denote $\G = \E^\D(1)$.
The Beilinson free monad \cite[(2.2.1)]{dedicata} yields a resolution
\[
0 \lra 2\O(-2) \stackrel{\psi}{\lra} 8\O(-2) \oplus 5\O(-1) \stackrel{\f}{\lra} \Omega^1 \oplus 4\O(-1) \oplus 5\O \lra \G \lra 0
\]
in which $\f_{12}=0$, $\f_{22}=0$. The map $5\O \to \G$ is an isomorphism on global sections and $\G$ has no zero-dimensional
torsion.
This shows, as in the proof of \cite[Proposition 3.1.3]{illinois}, that $\psi_{21}$ has one of the following canonical forms:
\[
\left[
\ba{cc}
0 & 0 \\
0 & 0 \\
X & R \\
Y & S \\
Z & T
\ea
\right], \qquad \qquad \left[
\ba{cc}
0 & 0 \\
X & 0 \\
Y & R \\
Z & S \\
0 & T
\ea
\right], \qquad \qquad \left[
\ba{cc}
X & 0 \\
Y & 0 \\
Z & R \\
0 & S \\
0 & T
\ea
\right].
\]
Here $\{ R, S, T \}$ is a basis of $V^*$.
At \cite[Proposition 3.1.3]{illinois} it is shown how each of these forms leads to a contradiction.

Assume next that $m=6$. 
The Beilinson free monad \cite[(2.2.1)]{dedicata} yields the resolution
\[
0 \lra \O(-2) \lra 5\O(-2) \oplus 8\O(-1) \lra 2\Omega^1 \oplus 8\O \lra \E \lra 0,
\]
hence the resolution
\[
0 \lra \O(-2) \lra 2\O(-3) \oplus 5\O(-2) \oplus 8\O(-1) \stackrel{\f}{\lra} 6\O(-2) \oplus 8 \O \lra \E \lra 0.
\]
As in the argument at \cite[Proposition 2.1.4]{illinois}, the rank of $\f_{12}$ is maximal,
otherwise $\E$ would map surjectively to the cokernel of a morphism $2\O(-3) \to 2\O(-2)$,
violating semi-stability. We arrive at the exact sequence
\[
0 \lra \O(-2) \stackrel{\psi}{\lra} 2\O(-3) \oplus 8\O(-1) \lra \O(-2) \oplus 8\O \lra \E \lra 0.
\]
The map $8\O \to \E$ is an isomorphism on global sections.
It follows, as in the argument at \cite[Proposition 2.1.4]{illinois}, that $\Coker(\psi_{12}) \isom 5\O(-1) \oplus \Omega^1(1)$.
Combining the resolution
\[
0 \lra 2\O(-3) \oplus 5\O(-1) \oplus \Omega^1(1) \lra \O(-2) \oplus 8\O \lra \E \lra 0
\]
with the Euler sequence we obtain the resolution
\[
0 \lra 2\O(-3) \oplus 5\O(-1) \oplus 3\O \stackrel{\f}{\lra} \O(-2) \oplus 8\O \oplus \O(1) \lra \E \lra 0.
\]
As before, $\f_{23}$ has maximal rank, otherwise $\E$ would map surjectively to the cokernel of a morphism
$2\O(-3) \oplus 5\O(-1) \to \O(-2) \oplus 6\O$, contradicting semi-stability.
Canceling $3\O$ we obtain a resolution of $\E$ that fits into a commutative diagram
\[
\xymatrix
{
0 \ar[r] & \O(-3) \oplus \O(-1) \ar[r] \ar[d]^-{\b} & \O \oplus \O(1) \ar[r] \ar[d]^-{\a} & \F \ar[r] \ar[d] & 0 \\
0 \ar[r] & 2\O(-3) \oplus 5\O(-1) \ar[r] & \O(-2) \oplus 5\O \oplus \O(1) \ar[r] & \E \ar[r] & 0
}.
\]
The map $\F \to \E$ above is the inclusion given in the hypothesis of the lemma, so its cokernel is $\F$.
From the snake lemma we obtain the exact sequence
\[
0 \lra \Coker(\b) \lra \Coker(\a) \lra \F \lra 0.
\]
Since $\a(-1)$ and $\a$ are injective on global sections, we see that $\a$ is injective, hence $\Coker(\a) \isom \O(-2) \oplus 4\O$.
From the above exact sequence we see that $\h^1(\Coker(\b)(-1)) = 1$.
On the other hand, if $\b_{11} \neq 0$, then $\h^1(\Coker(\b)(-1)) = 0$.
If $\b_{11} = 0$, then $\h^1(\Coker(\b)(-1)) = 3$.
We have arrived at a contradiction.
Our original assumption that $\h^0(\E(-1))=1$ must be wrong. This proves the lemma.
\end{proof}

\noindent
Recall from Section \ref{2.5} that $\M_3^{T}$ consists of isolated points of the form $\O_Q(-\Y)(2)$.
They are cokernels of morphisms in $W_3$ of the form
\[
{
\mathversion{bold}
\boldsymbol{\nu (l, q, d)} = \left[
\ba{cc}
c_1 d/q & l \\
c_2 d/l & q
\ea
\right],
}
\]
where $l \in \{ X, Y, Z \}$, $q$ is a quadratic monomial, $l$ does not divide $q$, $d$ is a monomial of degree $5$
in the ideal $(l, q)$, and $c_1, c_2 \in \CC$.
We have
\[
\ba{lll}
u_1 = d - q - l & \qquad & v_1 = l \\
u_2 = 0 & \qquad  & v_2 = q
\ea.
\]
All fixed points in $\M_3$ can be obtained from the points $\nu(X, q, d)$ by swapping $X$ and $Y$, or by swapping $X$ and $Z$.
Thus, in order to describe the $T$-representation for all tangent spaces at fixed points in $\M_3$,
it is enough to give the list of weights for the action of $T$ on $\T_{[\nu(X, q, d)]} \M$.
This is
\[
\ba{lllll}
d - q - x - y - z & \quad & - d + x + (\s^4 \setminus \{ d - x \}) & \quad & - x + y \\
d - 3x - y - z & \quad & - d + q + \s^3_x & \quad & - x + z \\
d - 2x - 2y - z & \quad & & \quad & - q + (\s^2_x \setminus \{ q \}) \\
d - 2x - y - 2z & \quad & & \quad &
\ea.
\]
To give a complete picture for the torus fixed locus in $\M_3$ we need to determine which points of $\M_3^T$
lie in the closure of
\[
(\M \setminus \M_3)^T = (\M_0 \cup \M_1 \cup \M_2)^T.
\]
This can be done using Proposition \ref{3.4.1}.
For all points represented in column 4 of Table 4 below we have $\dim N_{\chi_0} = 1$.
This shows that these points lie in the closure of affine lines in $(\M_0 \cup \M_1 \cup \M_2)^T$.

\begin{table}[!hpt]{Table 4. Fixed points in $\M_{3}$.}
\begin{center}
\begin{tabular}{| c | c | l | l |}
\hline
$l$ & $q$ & $d$ & limit sheaves \\
\hline
$X$ & $Y^2$ & $\Sigma^5 \setminus \{ Z^5, YZ^4 \}$ & $Y^3 Z$, $X^2 YZ$, $XY^2 Z$, $XYZ^2$ \\
\cline{2-4}
& $Z^2$ & $\Sigma^5 \setminus \{ Y^5, Y^4 Z \}$ & $YZ^3$, $X^2 YZ$, $XY^2 Z$, $XYZ^2$ \\
\cline{2-4}
& $YZ$ & $\Sigma^5 \setminus \{ Y^5, Z^5 \}$ & $Y^2 Z^2$, $X^2 YZ$, $XY^2 Z$, $XYZ^2$ \\
\hline
$Y$ & $X^2$ & $\Sigma^5 \setminus \{ Z^5, XZ^4 \}$ & $X^3 Z$, $X^2 YZ$, $XY^2 Z$, $XYZ^2$ \\
\cline{2-4}
& $Z^2$ & $\Sigma^5 \setminus \{ X^5, X^4 Z \}$ & $XZ^3$, $X^2 YZ$, $XY^2 Z$, $XYZ^2$ \\
\cline{2-4}
& $XZ$ & $\Sigma^5 \setminus \{ X^5, Z^5 \}$ & $X^2 Z^2$, $X^2 YZ$, $XY^2 Z$, $XYZ^2$ \\
\hline
$Z$ & $Y^2$ & $\Sigma^5 \setminus \{ X^5, X^4 Y\}$ & $XY^3$, $X^2 YZ$, $XY^2 Z$, $XYZ^2$ \\
\cline{2-4}
& $X^2$ & $\Sigma^5 \setminus \{ Y^5, XY^4 \}$ & $X^3 Y$, $X^2 YZ$, $XY^2 Z$, $XYZ^2$ \\
\cline{2-4}
& $XY$ & $\Sigma^5 \setminus \{ X^5, Y^5\}$ & $X^2 Y^2$, $X^2 YZ$, $XY^2 Z$, $XYZ^2$ \\
\hline
\end{tabular}
\end{center}
\end{table}


\subsection{Proof of the main theorem}

We are now ready to prove the theorem announced in the introduction.
We first recall some general facts. It is known that every irreducible component $X$ of $\M^T$ is a smooth subvariety of $\M$.
The irreducible components of $\M^T$ are disjoint. For every $x \in X$ we have $\dim (\T_x \M)_{\chi_0} = \dim X$.
Given points $x, y \in X$, the tangent spaces $\T_x \M$ and $\T_y \M$ are isomorphic as $T$-modules.
Inspecting Tables 2, 3, and 4, we see that the only fixed points $[\f]$ outside $\M_0$ for which $\dim (\T_{[\f]} \M)_{\chi_0} = 2$
are $[\e(X^2 Y^2 Z)]$, $[\e(XY^2 Z^2)]$, and $[\e(X^2 YZ^2)]$. Therefore, these are the only points of $\M \setminus \M_0$
that belong to an irreducible component of $\M^T$ of dimension $2$. All other points in $(\M \setminus \M_0)^T$
belong to an irreducible component of the fixed locus of dimension $0$ or $1$.
In view of Section \ref{2.2.2} and of the comments at the end of Section \ref{3.2},
we conclude that there are only three irreducible components of $\M^T$ of dimension greater than $1$,
namely $S$ and two other surfaces obtained from $S$ by interchanging $X$ and $Z$, respectively, by interchanging $Y$ and $Z$.
As mentioned at Section \ref{2.2.2}, $S$ can be obtained from $\PP^1 \times \PP^1$ by blowing up three points on the diagonal
and then blowing down the strict transform of the diagonal.
The Poincar\'e polynomial of $S$ is $P(x) = x^4 + 4x^2 + 1$ and its Hodge numbers satisfy the relation $h^{pq} = 0$ if $p \neq q$.
A further examination of Tables 1, 2, 3, and 4, proves the first part of the theorem concerning the structure of $\M^T$.
Beside the three surfaces, there are $1329$ isolated points and $174$ irreducible components isomorphic to $\PP^1$.
Let $\Pi$ be the set of points, let $\Lambda$ be the set of lines, let $\Sigma$  be the set of surfaces in $\M^T$.
Let $\Xi$ be the set of irreducible components of $\M^T$.

Let $\lambda(t) = (t^{n_0},\ t^{n_1},\ t^{n_2})$ be a one-parameter subgroup of $T$ that is not orthogonal to any non-zero
character $\chi \in \chi^*(T)$ appearing in the weight decomposition of some $\T_x \M$, $x \in \M^T$.
Inspecting the tables of characters from Section \ref{section_3}, we see that the set of characters $\chi \neq \chi_0$ for which
$\dim (\T_x \M)_{\chi} \neq 0$ for some $x \in \M^T$ is contained in the set
\[
\{ ix + jy +kz, \quad -6 \le i, j, k \le 6 \}.
\]
We can thus choose $\lambda(t) = (1, t, t^7)$. We consider the $\CC^*$ action on $\M$ induced by $\lambda$.
Given $X \in \Xi$ and $x \in X$, we denote by $p(X)$ the dimension of the subspace
of $\T_x \M$ on which $\CC^*$ acts with positive weights. In other words, $p(X)$ is the sum of the dimensions of the
non-zero spaces $(\T_x \M)_{\chi}$ for which $\langle \lambda, \chi \rangle > 0$.
According to the comments at the beginning of this proof, $p(X)$ does not depend on the choice of $x \in X$.
These numbers are computed in Appendix \ref{appendix_A} using Singular \cite{singular} programs.
The Homology Basis Formula \cite[Theorem 4.4]{carrell}
\[
\H_m (\M, \ZZ) \isom \bigoplus_{X \in \Xi} \H_{m - 2p(X)} (X, \ZZ)
\]
implies that the homology groups $\H_m (\M, \ZZ)$, $0 \le m \le 52$, are torsion-free.
This also yields a formula for the Poincar\'e polynomial:
\[
P_{\M}(x) = \sum_{x \in \Pi} x^{2p(x)} + \sum_{X \in \Lambda} (x^2 + 1) x^{2p(X)} + \sum_{X \in \Sigma} (x^4 + 4x^2 +1) x^{2p(X)}.
\]
Starting from this, the Singular \cite{singular} program in Appendix \ref{appendix_A}
computes the expression for $P_{\M}$ given in the introduction.
Owing to the fact \cite[(4.7)]{carrell} that the Homology Basis Formula respects the Hodge decomposition,
we have the isomorphism
\[
\H^p (\M, \Omega_{\M}^q) \isom \bigoplus_{X \in \Xi} \H^{p - p(X)} (X, \Omega_X^{q - p(X)}).
\]
This shows that $h^{pq}(\M) = 0$ for $p \neq q$, the same property being true for the Hodge numbers of all $X \in \Xi$.

\appendix

\section{Singular programs}
\label{appendix_A}

\begin{verbatim}
ring r=0,(x,y,z),dp;

int i,j; poly P, q, q1, q2, l1, l2, l3; P=0;
int points, lines; points = 0; lines = 0;

list s1, s2, s2_0, s2_1, s2_2, s3, s4, s5, d;

s1=list(x,y,z);
s2=list(2x, 2y, 2z, x+y, x+z, y+z);
s2_0=list(2y, 2z, y+z);
s2_1=list(2x, 2z, x+z);
s2_2=list(2x, 2y, x+y);
s3=list(3x, 3y, 3z, 2x+y, 2x+z, x+2y, 2y+z, x+2z, y+2z, x+y+z);
s4=list(4x, 4y, 4z, 3x+y, 2x+2y, x+3y, 3x+z, 2x+2z, x+3z,
3y+z, 2y+2z, y+3z, 2x+y+z, x+2y+z, x+y+2z);
s5=list(5x,5y,5z,4x+y,3x+2y,2x+3y,x+4y,4x+z,3x+2z,2x+3z,x+4z,
4y+z,3y+2z,2y+3z,y+4z,3x+y+z,x+3y+z,x+y+3z,
2x+2y+z,2x+y+2z,x+2y+2z);

proc add(poly p, list l)
{int i; list ll; ll=list();
 for(i=1; i<=size(l); i=i+1){ll=ll+list(p+l[i]);};
 return(ll);};

proc positive_part(list l)
{int i; int p; p=0; for (i=1; i<=size(l); i=i+1) {if (l[i]>0) {p=p+1;};};
 return(p);};

proc values(list w, list l)
{int i; list v; v=list(); for (i=1; i<=size(w); i=i+1)
 {v=v+list((w[i]/x)*l[1]+(w[i]/y)*l[2]+(w[i]/z)*l[3]);};
 return(v);};

proc sub(list l, list ll)
{list lll; int i,j,e; lll=l; for (j=1; j<=size(ll); j=j+1)
 {e=1; for(i=1; i<=size(lll); i=i+1)
  {if (lll[i]==ll[j] and e==1) {lll=delete(lll,i); e=0;};};};
 return(lll);};
 
proc id2(list l)
{list ll; ll = list(); int i; for (i=1; i<=size(l); i=i+1)
 {ll = ll+ add(l[i], s3);};
 return(sub(s5, (sub(s5, ll))));};

proc id3(list l)
{list ll; ll = list(); int i; for (i=1; i<=size(l); i=i+1)
 {ll = ll+ add(l[i], s2);};
 return(sub(s5, (sub(s5, ll))));};

proc point_3(list l)
{points=points+3;
return(x^(2*positive_part(values(l, list(0,1,7))))
+x^(2*positive_part(values(l, list(7,1,0))))
+x^(2*positive_part(values(l, list(0,7,1)))));};

proc point_3_1(list l)
{points=points+3;
return(x^(2*positive_part(values(l, list(0,1,7))))
+x^(2*positive_part(values(l, list(1,0,7))))
+x^(2*positive_part(values(l, list(7,1,0)))));};

proc point_6(list l)
{points=points+6;
return(x^(2*positive_part(values(l, list(0,1,7))))
+x^(2*positive_part(values(l, list(1,0,7))))
+x^(2*positive_part(values(l, list(7,1,0))))
+x^(2*positive_part(values(l, list(1,7,0))))
+x^(2*positive_part(values(l, list(0,7,1))))
+x^(2*positive_part(values(l, list(7,0,1)))));};

proc line_3(list l)
{lines=lines+3;
return((1+x^2)*x^(2*positive_part(values(l, list(0,1,7))))
+(1+x^2)*x^(2*positive_part(values(l, list(7,1,0))))
+(1+x^2)*x^(2*positive_part(values(l, list(0,7,1)))));};

proc line_3_1(list l)
{lines=lines+3;
return((1+x^2)*x^(2*positive_part(values(l, list(0,1,7))))
+(1+x^2)*x^(2*positive_part(values(l, list(1,0,7))))
+(1+x^2)*x^(2*positive_part(values(l, list(7,1,0)))));};

proc line_6(list l)
{lines=lines+6;
return((1+x^2)*x^(2*positive_part(values(l, list(0,1,7))))
+(1+x^2)*x^(2*positive_part(values(l, list(1,0,7))))
+(1+x^2)*x^(2*positive_part(values(l, list(7,1,0))))
+(1+x^2)*x^(2*positive_part(values(l, list(1,7,0))))
+(1+x^2)*x^(2*positive_part(values(l, list(0,7,1))))
+(1+x^2)*x^(2*positive_part(values(l, list(7,0,1)))));};

proc surface_3(list l)
{return((1+4*(x^2)+x^4)*x^(2*positive_part(values(l, list(0,1,7))))
+(1+4*(x^2)+x^4)*x^(2*positive_part(values(l, list(7,1,0))))
+(1+4*(x^2)+x^4)*x^(2*positive_part(values(l, list(0,7,1)))));};

proc w0(list u, list v)
{return(add(-v[1]-u[1], s2) + add(-v[1]-u[2], s2) + add(-v[1]-u[3], s1)
+add(-v[2]-u[1], s2) + add(-v[2]-u[2], s2) + add(-v[2]-u[3], s1)
+add(-v[3]-u[1], s2) + add(-v[3]-u[2], s2) + add(-v[3]-u[3], s1));};

proc g0(list u, list v)
{return(list(u[1]-u[1], u[1]-u[2], -v[1]+v[1], -v[1]+v[2], -v[1]+v[3])
+ list(u[2]-u[1], u[2]-u[2], -v[2]+v[1], -v[2]+v[2], -v[2]+v[3])
+ add(u[3]-u[1], s1) +
add(u[3]-u[2], s1) + list(-v[3]+v[1], -v[3]+v[2], -v[3]+v[3]));};

proc m0(list u, list v)
{return(sub(w0(u,v), g0(u,v)));};

list alpha;
for (i=1; i<=3; i=i+1) {for (j=1; j<=3; j=j+1)
{alpha = m0(list(s2_0[i]-x, s2_1[j]-y, 0), list(x, y, z));
P=P+point_3(alpha);};};

alpha = m0(list(x, x, 0), list(x, y, z));
P=P+point_3_1(alpha);

list beta;
for (i=1; i<=3; i=i+1) {for (j=1; j<=3; j=j+1)
{beta = m0(list(s2_0[i]-x, s1[j], 0), list(x, y, z));
P=P+line_3_1(beta);};};

list gamma;
gamma = m0(list(x,y,0), list(x,y,z));
P = P + surface_3(gamma);

list delta;
q1=2x; q2=2y;
d=sub(id3(list(3x, x+2y, 2x+y, 3y)), list(2x+2y+z));
for (i=1; i<=size(d); i=i+1)
{delta= m0(list(d[i]-x-y-q2, d[i]-x-y-q1, 0),
list(x, y, -d[i]+q1+q2+x+y));
P = P + point_3(delta);};

d=list(2x+2y+z);
for (i=1; i<=size(d); i=i+1)
{delta= m0(list(d[i]-x-y-q2, d[i]-x-y-q1, 0),
list(x, y, -d[i]+q1+q2+x+y));
P = P + line_3(delta);};

q1=2x; q2=2z;
d=sub(id3(list(3x, x+2z, 2x+y, y+2z)), list(3x+y+z));
for (i=1; i<=size(d); i=i+1)
{delta= m0(list(d[i]-x-y-q2, d[i]-x-y-q1, 0),
list(x, y, -d[i]+q1+q2+x+y));
P = P + point_6(delta);};

q1=2z; q2=x+y;
d=sub(id3(list(x+2z, 2x+y, y+2z, x+2y)), list(2x+2y+z));
for (i=1; i<=size(d); i=i+1)
{delta= m0(list(d[i]-x-y-q2, d[i]-x-y-q1, 0), list(x, y, -d[i]+q1+q2+x+y));
P = P + point_3(delta);};

q1=2x; q2=y+z;
d=sub(id3(list(3x, x+y+z, 2x+y, 2y+z)), list(2x+y+2z, 3x+2y));
for (i=1; i<=size(d); i=i+1)
{delta= m0(list(d[i]-x-y-q2, d[i]-x-y-q1, 0), list(x, y, -d[i]+q1+q2+x+y));
P = P + point_6(delta);};

d=list(2x+y+2z);
for (i=1; i<=size(d); i=i+1)
{delta= m0(list(d[i]-x-y-q2, d[i]-x-y-q1, 0), list(x, y, -d[i]+q1+q2+x+y));
P = P + line_6(delta);};

q1=2x; q2=x+y;
d=sub(id3(list(3x, 2x+y, x+2y)), list(2x+y+2z, 3x+y+z, 2x+2y+z, 2x+3y));
for (i=1; i<=size(d); i=i+1)
{delta= m0(list(d[i]-x-y-q2, d[i]-x-y-q1, 0), list(x, y, -d[i]+q1+q2+x+y));
P = P + point_6(delta);};

d=list(2x+y+2z,3x+y+z,2x+2y+z,2x+3y);
for (i=1; i<=size(d); i=i+1)
{delta= m0(list(d[i]-x-y-q2, d[i]-x-y-q1, 0), list(x, y, -d[i]+q1+q2+x+y));
P = P + line_6(delta);};

q1=x+z; q2=y+z;
d=sub(id3(list(2x+z, x+y+z, 2y+z)), list(x+y+3z, x+3y+z, 2x+2y+z, 3x+y+z));
for (i=1; i<=size(d); i=i+1)
{delta= m0(list(d[i]-x-y-q2, d[i]-x-y-q1, 0), list(x, y, -d[i]+q1+q2+x+y));
P = P + point_3(delta);};

d=list(x+y+3z,x+3y+z,3x+y+z);
for (i=1; i<=size(d); i=i+1)
{delta= m0(list(d[i]-x-y-q2, d[i]-x-y-q1, 0), list(x, y, -d[i]+q1+q2+x+y));
P = P + line_3(delta);};

q1=2x; q2=x+z;
d=sub(id3(list(3x, 2x+z, 2x+y, x+y+z)),
list(3x+2z, 2x+y+2z, 2x+2y+z, 4x+y));
for (i=1; i<=size(d); i=i+1)
{delta= m0(list(d[i]-x-y-q2, d[i]-x-y-q1, 0),
list(x, y, -d[i]+q1+q2+x+y));
P = P + point_6(delta);};

d=list(3x+2z,2x+y+2z,2x+2y+z);
for (i=1; i<=size(d); i=i+1)
{delta= m0(list(d[i]-x-y-q2, d[i]-x-y-q1, 0),
list(x, y, -d[i]+q1+q2+x+y));
P = P + line_6(delta);};

q1=x+y; q2=y+z;
d=sub(id3(list(2x+y, x+y+z, x+2y, 2y+z)),
list(x+2y+2z, 2x+y+2z, 2x+3y, 3x+y+z));
for (i=1; i<=size(d); i=i+1)
{delta= m0(list(d[i]-x-y-q2, d[i]-x-y-q1, 0),
list(x, y, -d[i]+q1+q2+x+y));
P = P + point_6(delta);};

d=list(x+2y+2z,2x+y+2z,3x+y+z);
for (i=1; i<=size(d); i=i+1)
{delta= m0(list(d[i]-x-y-q2, d[i]-x-y-q1, 0),
list(x, y, -d[i]+q1+q2+x+y));
P = P + line_6(delta);};

q1=x+z; q2=2z;
d=sub(id3(list(2x+z, x+2z, x+y+z, y+2z)),
list(x+2y+2z, 2x+y+2z, 3x+2z));
for (i=1; i<=size(d); i=i+1)
{delta= m0(list(d[i]-x-y-q2, d[i]-x-y-q1, 0),
list(x, y, -d[i]+q1+q2+x+y));
P = P + point_6(delta);};

d=list(x+2y+2z, 3x+2z);
for (i=1; i<=size(d); i=i+1)
{delta= m0(list(d[i]-x-y-q2, d[i]-x-y-q1, 0),
list(x, y, -d[i]+q1+q2+x+y));
P = P + line_6(delta);};

proc w1(list u, list v)
{return(add(-v[1]-u[1],s1)+add(-v[1]-u[2],s1)+list(-v[1]-u[3],-v[1]-u[4])
+add(-v[2]-u[1],s2)+add(-v[2]-u[2],s2)+add(-v[2]-u[3],s1)+add(-v[2]-u[4],s1)
+add(-v[3]-u[1],s2)+add(-v[3]-u[2],s2)+add(-v[3]-u[3],s1)+add(-v[3]-u[4],s1)
+add(-v[4]-u[1],s2)+add(-v[4]-u[2],s2)+add(-v[4]-u[3],s1)+add(-v[4]-u[4],s1));};

proc g1(list u, list v)
{return(add(-v[2]+v[1], s1) + list(-v[2]+v[2], -v[2]+v[3], -v[2]+v[4])
+ add(-v[3]+v[1], s1) + list(-v[3]+v[2], -v[3]+v[3], -v[3]+v[4])
+ add(-v[4]+v[1], s1) + list(-v[4]+v[2], -v[4]+v[3], -v[4]+v[4])
+ list(0, 0, 0, 0, u[1]-u[2], u[2]-u[1], u[3]-u[4], u[4]-u[3])
+ add(u[3]-u[1], s1) + add(u[3]-u[2], s1)
+ add(u[4]-u[1], s1) + add(u[4]-u[2], s1));};

proc m1(list u, list v, list s)
{return(sub(w1(u,v), sub(g1(u,v), s)));};

list epsilon;
d=sub(id3(add(x, list(x+y, x+z, y+z)) +
add(y, list(x+y, x+z, y+z))), list(2x+2y+z, x+y+3z));
for (i=1; i<=size(d); i=i+1)
{epsilon = m1(list(d[i]-y, d[i]-x, x+y+z, x+y+z),
list(x+y-d[i], -y-z, -x-z, -x-y),
list(-d[i]+2x+2y+z, -d[i]+2x+2y+z));
P = P + point_3(epsilon);};

d=list(x+y+3z);
for (i=1; i<=size(d); i=i+1)
{epsilon = m1(list(d[i]-y, d[i]-x, x+y+z, x+y+z),
list(x+y-d[i], -y-z, -x-z, -x-y),
list(-d[i]+2x+2y+z, -d[i]+2x+2y+z));
P = P + line_3(epsilon);};

list zeta;
l1=x; l2=y;
d=sub(id3(add(x, list(2x, x+y, y+z)) +
add(y, list(2x, x+y, y+z))), list(3x+2y, 2x+2y+z, 2x+y+2z));
for (i=1; i<=size(d); i=i+1)
{zeta = m1(list(d[i]-l2, d[i]-l1, 2x+y, x+y+z),
list(-d[i]+l1+l2, -2x, -y-z, -x-y),
list(2x+y-d[i]+l1+l2, x+y+z-d[i]+l1+l2));
P = P + point_6(zeta);};

d=list(2x+y+2z);
for (i=1; i<=size(d); i=i+1)
{zeta = m1(list(d[i]-l2, d[i]-l1, 2x+y, x+y+z),
list(-d[i]+l1+l2, -2x, -y-z, -x-y),
list(2x+y-d[i]+l1+l2, x+y+z-d[i]+l1+l2));
P = P + line_6(zeta);};

l1=x; l2=z;
d=sub(id3(add(x, list(2x, x+y, y+z)) +
add(z, list(2x, x+y, y+z))), list(3x+y+z, 2x+y+2z, x+3y+z));
for (i=1; i<=size(d); i=i+1)
{zeta = m1(list(d[i]-l2, d[i]-l1, 2x+y, x+y+z),
list(-d[i]+l1+l2, -2x, -y-z, -x-y),
list(2x+y-d[i]+l1+l2, x+y+z-d[i]+l1+l2));
P = P + point_6(zeta);};

d=list(x+3y+z);
for (i=1; i<=size(d); i=i+1)
{zeta = m1(list(d[i]-l2, d[i]-l1, 2x+y, x+y+z),
list(-d[i]+l1+l2, -2x, -y-z, -x-y),
list(2x+y-d[i]+l1+l2, x+y+z-d[i]+l1+l2));
P = P + line_6(zeta);};

l1=y; l2=z;
d=sub(id3(add(y, list(2x, x+y, y+z)) +
add(z, list(2x, x+y, y+z))), list(2x+2y+z, x+2y+2z));
list s_z; s_z = list();
for (i=1; i<=size(d); i=i+1)
{zeta = m1(list(d[i]-l2, d[i]-l1, 2x+y, x+y+z),
list(-d[i]+l1+l2, -2x, -y-z, -x-y),
list(2x+y-d[i]+l1+l2, x+y+z-d[i]+l1+l2));
s_z= s_z + list(size(zeta));
P = P + point_6(zeta);};

list eta;
l1=x; l2=y;
d=sub(id3(add(x, list(2x, x+y, 2y)) + add(y, list(2x, x+y, 2y))),
list(3x+2y, 2x+3y, x+2y+2z, 2x+y+2z));
for (i=1; i<=size(d); i=i+1)
{eta= m1(list(d[i]-l2, d[i]-l1, x+2y, 2x+y),
list(-d[i]+l1+l2, -2y, -2x, -x-y),
list(x+2y-d[i]+l1+l2, 2x+y-d[i]+l1+l2));
P = P + point_3(eta);};

d=list(x+2y+2z, 2x+y+2z);
for (i=1; i<=size(d); i=i+1)
{eta= m1(list(d[i]-l2, d[i]-l1, x+2y, 2x+y),
list(-d[i]+l1+l2, -2y, -2x, -x-y),
list(x+2y-d[i]+l1+l2, 2x+y-d[i]+l1+l2));
P = P + line_3(eta);};

l1=x; l2=z;
d=sub(id3(add(x, list(2x, x+y, 2y)) + add(z, list(2x, x+y, 2y))),
list(2x+2y+z, 3x+y+z));
for (i=1; i<=size(d); i=i+1)
{eta= m1(list(d[i]-l2, d[i]-l1, x+2y, 2x+y),
list(-d[i]+l1+l2, -2y, -2x, -x-y),
list(x+2y-d[i]+l1+l2, 2x+y-d[i]+l1+l2));
P = P + point_6(eta);};

list theta;
l1=x; l2=y;
d=sub(id3(add(x, list(2x, x+y, x+z)) + add(y, list(2x, x+y, x+z))),
list(3x+y+z, 3x+2y, 2x+y+2z, x+3y+z, x+2y+2z));
for (i=1; i<=size(d); i=i+1)
{theta = m1(list(d[i]-l2, d[i]-l1, 2x+y, 2x+z),
list(-d[i]+l1+l2, -x-y, -x-z, -2x),
list(2x+y-d[i]+l1+l2, 2x+z-d[i]+l1+l2));
P = P + point_6(theta);};

d=list(2x+y+2z, x+3y+z, x+2y+2z);
for (i=1; i<=size(d); i=i+1)
{theta = m1(list(d[i]-l2, d[i]-l1, 2x+y, 2x+z),
list(-d[i]+l1+l2, -x-y, -x-z, -2x),
list(2x+y-d[i]+l1+l2, 2x+z-d[i]+l1+l2));
P = P + line_6(theta);};

l1=y; l2=z;
d=sub(id3(add(y, list(2x, x+y, x+z)) + add(z, list(2x, x+y, x+z))),
list(2x+2y+z, 2x+y+2z, x+y+3z, x+2y+2z, x+3y+z));
for (i=1; i<=size(d); i=i+1)
{theta = m1(list(d[i]-l2, d[i]-l1, 2x+y, 2x+z),
list(-d[i]+l1+l2, -x-y, -x-z, -2x),
list(2x+y-d[i]+l1+l2, 2x+z-d[i]+l1+l2));
P = P + point_3_1(theta);};

d=list(x+y+3z, x+2y+2z, x+3y+z);
list s_t; s_t = list();
for (i=1; i<=size(d); i=i+1)
{theta = m1(list(d[i]-l2, d[i]-l1, 2x+y, 2x+z),
list(-d[i]+l1+l2, -x-y, -x-z, -2x),
list(2x+y-d[i]+l1+l2, 2x+z-d[i]+l1+l2));
s_t = s_t + list(size(theta));
P = P + line_3_1(theta);};

proc w2(list u, list v)
{return(add(-v[1]-u[1], s1) + add(-v[1]-u[2], s1) + add(-v[1]-u[3], s1)
+add(-v[2]-u[1], s1) + add(-v[2]-u[2], s1) + add(-v[2]-u[3], s1)
+add(-v[3]-u[1], s3) + add(-v[3]-u[2], s3) + add(-v[3]-u[3], s3));};

proc g2(list u, list v)
{return(list(u[1]-u[1], u[1]-u[2], u[1]-u[3])
+ list(u[2]-u[1], u[2]-u[2], u[2]-u[3])
+ list(u[3]-u[1], u[3]-u[2], u[3]-u[3])
+ list(0, 0, -v[1]+v[2], -v[2]+v[1])
+ add(-v[3]+v[1], s2) + add(-v[3]+v[2], s2));};

proc m2(list u, list v)
{return(sub(w2(u,v), g2(u,v))
+ list(u[1]+v[3]-x-y-z, u[2]+v[3]-x-y-z, u[3]+v[3]-x-y-z));};

list iota;
d=sub(id2(list(x+y, x+z, y+z)), list(2x+2y+z, x+2y+2z, 2x+y+2z));
for (i=1; i<=size(d); i=i+1)
{iota = m2(list(x, y, z), list(0, 0, d[i]-x-y-z));
P = P + x^(2*positive_part(values(iota, list(0,1,7))));};

points=points+15;

list kappa;
d= sub(id2(list(2x, x+y, y+z)), list(x+2y+2z, 3x+y+z, 2x+2y+z));
for (i=1; i <=size(d); i=i+1)
{kappa = m2(list(y-x, x-z, 0), list(x, z, d[i]-x-y));
P = P + point_6(kappa);};

list lambda;
d= sub(id2(list(2x, x+y, 2y)), list(x+3y+z, 3x+y+z, 2x+2y+z));
for (i=1; i <=size(d); i=i+1)
{lambda = m2(list(x-y, y-x, 0), list(y, x, d[i]-x-y));
P = P + point_3(lambda);};

list mu;
d=sub(id2(list(2x, x+y, x+z)),
list(x+y+3z, x+2y+2z, x+3y+z, 2x+y+2z, 2x+2y+z, 3x+y+z));
for (i=1; i <=size(d); i=i+1)
{mu = m2(list(x-y, x-z, 0), list(y, z, d[i]-2x));
P = P + point_3_1(mu);};

d = list(x+y+3z, x+2y+2z, x+3y+z);
for (i=1; i <=size(d); i=i+1)
{mu = m2(list(x-y, x-z, 0), list(y, z, d[i]-2x));
P = P + line_3_1(mu);};

proc w3(list u, list v)
{return(add(-v[1]-u[1], s3) + add(-v[1]-u[2], s1)
+ add(-v[2]-u[1], s4) + add(-v[2]-u[2], s2));};

proc g3(list u, list v)
{return(list(0, 0, 0)
+ add(u[2]-u[1], s2) + add(-v[2]+v[1], s1));};

proc m3(list u, list v)
{return(sub(w3(u,v), g3(u,v))
+ list(u[1]+v[1]-x-y-z, u[1]+v[2]-2x-y-z, u[1]+v[2]-x-2y-z,
u[1]+v[2]-x-y-2z));};

list nu;
q=2y;
d= sub(s5, list(x+3y+z, 2x+y+2z, 3x+y+z, 2x+2y+z, 5z, y+4z));
for (i=1; i <=size(d); i=i+1)
{nu = m3(list(d[i]-q-x, 0), list(x, q));
P = P + point_6(nu);};

q=y+z;
d=sub(s5, list(x+2y+2z, 2x+y+2z, 3x+y+z, 2x+2y+z, 5y, 5z));
for (i=1; i <=size(d); i=i+1)
{nu = m3(list(d[i]-q-x, 0), list(x, q));
P = P + point_3_1(nu);};

\end{verbatim}

\end{document}